\newtheorem{theorem}{Theorem}
\newtheorem{proposition}[theorem]{Proposition}
\newtheorem{lemma}[theorem]{Lemma}
\newtheorem{remark}[theorem]{Remark}
\numberwithin{equation}{section}
\newcommand{\RR}{\mathbb{R}}
\newcommand{\eps}{\varepsilon}
\newcommand{\sign}{\mathrm{sign}\,}
\newcommand{\T}{\mathsf{T}}
\renewcommand{\L}{\mathsf{L}}
\newcommand{\Q}{\mathsf{Q}}
\renewcommand{\H}{\mathsf{H}}
\newcommand{\A}{\mathsf{A}}
\newcommand{\J}{\mathsf{J}}
\newcommand{\g}{g}
\newcommand{\ub }{u}
\newcommand{\bT}{\mathbf{T}}
\newcommand{\bB}{\mathbf{B}}
\newcommand{\R}{{\mathbb {R}}}
\newcommand{\scalar}[2]{\langle #1,#2\rangle}
    \let\@fnsymbol\@arabic
\begin{document}

\title[Confinement by biased velocity jumps: aggregation of {\it Escherichia coli}]{Confinement by biased velocity jumps:\\aggregation of {\it Escherichia coli}}

\author[V. Calvez]{Vincent Calvez}
\address{Unit\'e de Math\'ematiques Pures et Appliqu\'ees, Ecole Normale Sup\'erieure de Lyon, CNRS UMR 5669, and project-team Inria NUMED, Lyon,
France.}
\email{\tt vincent.calvez@ens-lyon.fr}
\author[G. Raoul]{Ga\"el Raoul}
\address{Centre d'Ecologie Fonctionnelle et
Evolutive, CNRS UMR 5175, Montpellier, France.}
\email{\tt raoul@cefe.cnrs.fr}
\author[C. Schmeiser]{Christian Schmeiser}
\address{Fakult\"at f\"ur Mathematik, Universit\"at Wien,  Austria}
\email{\tt  christian.schmeiser@univie.ac.at} 			

\maketitle

\begin{abstract}
We investigate a linear kinetic equation derived from a velocity jump process modelling bacterial chemotaxis in the presence of an external chemical signal centered at the origin. We prove the existence of a positive equilibrium distribution with an exponential decay at infinity. We deduce a hypocoercivity result, namely: the solution of the Cauchy problem converges exponentially fast towards the stationary state. The strategy follows [J. Dolbeault, C. Mouhot, and C. Schmeiser, Hypocoercivity for linear kinetic equations conserving mass, Trans. AMS 2014]. The novelty here is that the equilibrium does not belong to the null spaces of the collision operator and of the transport operator. From a modelling viewpoint it is related to the observation that exponential confinement is generated by a spatially inhomogeneous bias in the velocity jump process.
\end{abstract}

\section{Introduction}

Unbiased velocity randomization by a jump process or by Brownian motion combined with acceleration by the force field produced by 
a confining potential can lead to convergence to an invariant probability measure, if the confinement is strong enough to balance
the dispersive effect of velocity randomization. For kinetic transport models of this kind, convergence to confined equilibria has been
studied extensively leading to strong convergence results with algebraic \cite{car,DesVil} and later also exponential \cite{DMS,HerNie} 
convergence rates. This is strongly related to the corresponding macroscopic description by Fokker-Planck equations of drift-diffusion
type \cite{AMTU}.

In this work a related type of particle dynamics is considered, where confinement is achieved by a biased velocity jump process, where
the bias replaces the confining acceleration field. The motivation comes from kinetic transport models for the chemotactic motility of 
microorganisms driven by gradients of chemo-attractors. The prototypical example is the bacterium {\it Escheria coli}, whose swimming 
pattern has been described as run-and-tumble \cite{BerBro,Berg-book}, meaning that periods of straight running alternate with periods of reorientation
(tumbling). Since typically tumble-periods are short compared to run-periods, models with instantaneous velocity jumps seem reasonable, but see \cite{Kafri-2008,Chatterjee-2011}. 
In the presence of a spatial chemo-attractant gradient, this stochastic process is biased upwards the gradient, although {\it E. coli} is too 
small to reliably measure the gradient along its length. An explanation for this phenomenon is that {\it E. coli} is able to measure gradients
in time along its path and increases its tumbling frequency, if it experiences decreasing chemo-attractant concentrations. This produces
the desired drift, even if the outward velocity after tumbling events is unbiased \cite{BerTur}.

This and similar motility behavior types have been incorporated into kinetic transport models of chemotaxis \cite{Alt1,OthDunAlt,Dolak-2005,Erban-Othmer-2005,Xue-Hwang-2010}, and a
connection to versions of the classical Patlak-Keller-Segel (PKS-) model \cite{KelSeg,Patlak} has been made by the macroscopic diffusion 
limit \cite{Alt2,HilOth-II}. If chemotaxis acts as a means of signalling between cells, it can be responsible for various types of pattern formation
with aggregation as the most important and basic outcome. 
Two examples of pattern formation are stable clusters of bacteria \cite{Mittal} and travelling pulses of bacterial colonies \cite{Adler66,Saragosti-PLoS,Saragosti}. For corresponding kinetic transport models with nonlinear coupling to a reaction-diffusion model for the chemo-attractant, the macroscopic diffusion limit produces nonlinear versions of the PKS-model \cite{CMPS,Hwang-Kang-Stevens-2005,Saragosti-PLoS}. We emphasize that hydrodynamic limits can also be derived from the kinetic model \cite{Dolak-2005,Filbet-Laurencot-Perthame-2005,James-Vauchelet-2013}. 

Aggregation patterns of signalling {\it E. coli} have been observed and simulated in \cite{Mittal}. The stochastic simulations have considered
a prescribed peaked stationary chemo-attractant concentration. A kinetic model corresponding to these simulations will be considered here:
\begin{equation}\label{kinetic}
  \partial_t f + v\cdot\nabla_x f = Q(f) = \int_V \left(K(x,v')f(t,x,v') - K(x,v)f(t,x,v)\right)dv' \,,
\end{equation}
where $f(t,x,v)$ is the phase (position($x$)-velocity($v$)-) space distribution of microorganisms at time $t\ge 0$, with $x\in\RR^d$ and
$v\in V\subset\R^d$. The velocity set $V$ is supposed to be bounded and rotationally symmetric. The right hand side of \eqref{kinetic}
is the {\em turning operator}. It describes the velocity changes due to tumbling. The turning kernel $K(x,v)$ is the rate of changing from
velocity $v$ to a different velocity $v'$ at position $x$. The $x$-dependence contains the influence of the chemo-attractant. The fact that 
the turning kernel only depends on the incoming (pre-tumbling) velocity means a complete randomization of velocity at tumbling events.
This is actually not describing the experimental evidence precisely. Whereas independence of the outgoing velocity distribution from the 
chemo-attractant gradient seems to be a valid assumption, some directional persistence of {\it E. coli} has been observed, {\em i.e.}  an outgoing 
velocity distribution biased by the incoming velocity \cite{BerBro}. This effect might have important quantitative consequences on the efficiency of chemotactic foraging strategies \cite{Schnitzer,Locsei,Nicolau},
but apparently does not change the qualitative picture.

The main deficiency of \eqref{kinetic} as a model for the experiments of \cite{Mittal} is the lack of a nonlinear coupling with an equation for
the chemo-attractant, describing production by the cells, diffusion, and decay. The restriction to the linear problem has purely mathematical 
reasons. The following sections will show that proving the existence and stability of aggregated stationary solutions already poses significant 
difficulties for a simple version of \eqref{kinetic}. Extensions to nonlinear models are the subject of ongoing investigations. Numerical 
studies \cite{Vauchelet-2010,Carrillo,Filbet-Yang} show that convergence to a steady  state can be expected under suitable assumptions.  We highlight the well-balanced numerical scheme proposed by Gosse in \cite[Section 9.4]{Gosse-book}. This focuses on the local kinetic equilibria of \eqref{kinetic} in a spatial finite volume with inflow boundary conditions, very much in the spirit of the present work.

We shall restrict ourselves to a one-dimensional model ($d=1$) with the velocity set $V = [-1/2,1/2]$ (chosen such that $|V|=1$).
A typical example for the choice of the turning kernel is given by
\begin{equation}\label{eq:example}
  K(x,v) = 1+\chi\sign(xv) \,,\qquad\mbox{with } 0<\chi<1\,.
\end{equation}
The turning rate takes the larger value $1+\chi$ for cells moving away from $x=0$, and the smaller value $1-\chi$ for cells moving
towards $x=0$. Equilibrium distributions of the turning operator, {\em i.e.} , multiples of $1/K$, have a jump at $x=0$ and are therefore
not solutions of \eqref{kinetic}. Stationary solutions have to balance the turning operator with the transport operator $v\partial_x$. Existence
and uniqueness (up to a multiplicative constant) of a stationary solution will be proven in the following section.
As an illustration we have a short look at the two-velocity model,  also known as the Cattaneo model for chemotaxis \cite{Hadeler-1997,HilOth-II,Hillen-2004,Hwang-Kang-Stevens-2006,Natalini-Ribot,Gosse-2012,James-Vauchelet-2013},
\begin{equation*} \begin{array}{l}
  \partial_t f^+ + \partial_x f^+ = ( 1 - \chi\sign(x) ) f^- - (1 + \chi\sign(x)) f^+ \,,\medskip\\
  \partial_t f^- - \partial_x f^- = ( 1 + \chi\sign(x) ) f^+ - (1 - \chi\sign(x)) f^- \,.
\end{array}
\end{equation*}
Here, steady states are easily found explicitly as multiples of $g^+(x) = g^-(x) = e^{-2\chi|x|}$. The exponential decay with respect to position
carries over to the model with $V=[-1/2,1/2]$, which is also proven in the following section. We also refer to \cite{Keener-Newby} for the analysis of a discrete velocity jump process and its stationary distribution based on the WKB expansion.

Section 3 is concerned with the decay to equilibrium as $t\to\infty$, employing the modified entropy approach of \cite{DMS} for abstract 
hypocoercive operators \cite{Villani}. This approach is based on a decomposition of the generator of the dynamics into a symmetric 
negative semidefinite operator and a skew-symmetric operator, where the latter needs to satisfy a certain coercivity condition on the 
null space of the former (called 'macroscopic coercivity' in \cite{DMS}, compare also to 'instability of the hydrodynamic description' in 
\cite{DesVil2}), and the former needs to be coercive on the complement of its null space ('microscopic coercivity' in \cite{DMS}). The set of equilibria has to be the intersection of the null spaces of both operators. In the present case, this does not 
permit the splitting into turning and transport operators, as usual for kinetic equations. The main preparatory step is therefore the definition 
of an appropriate splitting, before the modified entropy method is applied.

The terminology 'microscopic' and 'macroscopic' refers to an asymptotic limit, where the characteristic time scale of the symmetric operator
is assumed to be much smaller than that of the skew-symmetric operator. This limit is carried out in the first part of Section 4.
Typically the separation of time scales can be achieved by a macroscopic re-scaling of length in kinetic equations, which is not the case
in the present situation by the redefinition of the splitting between operators. The asymptotic limit has therefore not much biological relevance.
A second, more realistic macroscopic limit is carried out in the second part of Section 4, where smallness of the parameter $\chi$ is assumed,
and length and time is rescaled diffusively. Both macroscopic limits produce drift-diffusion equations, whose diffusivities and convection 
velocities are different, but with the same qualitative behavior. This brings us back to the beginning of the introduction, since it shows that
the macroscopic behavior created by biased velocity jumps is the same as for unbiased jumps combined with a confining potential.

\section{Existence and exponential decay of stationary solutions}

We seek a nonnegative stationary state $\g\in L_+^1(\RR\times V)$ satisfying
\begin{equation}
v\partial_x \g(x,v) + K(x,v) \g(x,v) - \int_{V} K(x,v') \g(x,v')\, dv'  = 0\, .\label{eq:equilibrium}
\end{equation}
For simplicity, we assume $V = [-1/2,1/2]$, such that $|V| = 1$.
More generally we may assume that we are given a probability measure $d\nu (v)$ which is compactly supported. Then we replace $dv$ with $d\nu(v) $ in the following. 
We denote $V_+ = V\cap [0,+\infty)$ and $V_- = V\cap (-\infty,0]$. 

We make the following assumptions on the turning kernel $K(x,v)$: 
\begin{enumerate}[{\bf (H1)}]
\item there exists $K_{\min}$ and $K_{\max}$ such that $ 0< K_{\min}\leq  K(x,v)\leq K_{\max}$,
\item $ K(x,v) =  K_+(v) $ for $x>0$ and $K(x,v) =  K_-(v) $ for $x<0$,
\item $\int_V v ( K_+(v))^{-1}\, dv < 0$,
\item $K$ is symmetric with respect to $x=0$, {\em i.e.}  $K_+(v) =  K_-(-v)$, and piecewise continuous, with a possible jump only at $v = 0$. 
\end{enumerate}
These conditions are satisfied by our main example \eqref{eq:example}.

\begin{theorem}\label{eq:existence st st}
Under the assumptions \textbf{(H1--H4)} there exists a  nontrivial stationary state $\g(x,v)$ solution to \eqref{eq:equilibrium}. It is positive, bounded and symmetric: $\g(x,v) = \g(-x,-v)$. There exists $\alpha>0$, a positive velocity profile $G$, and a constant $C>0$ such that 
\begin{equation}\label{g-est}
  \frac1C\, e^{-\alpha x} G(v) \leq \g(x,v) \leq C\, e^{-\alpha x} G(v) \,,
   \qquad  x\geq 0\,,\  v\in V \, . 
\end{equation}
\end{theorem}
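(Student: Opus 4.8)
The plan is to first identify the far-field decay rate through a dispersion relation, then to construct $\g$ by solving a half-space problem with a reflection condition that encodes the symmetry, and finally to extract the two-sided bound \eqref{g-est} from the representation along characteristics.

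First, since $K(x,v)=K_+(v)$ is independent of $x$ on $\{x>0\}$, I would look there for separated solutions $e^{-\alpha x}G(v)$ of \eqref{eq:equilibrium}. Substitution gives $(K_+(v)-\alpha v)G(v)=\int_V K_+(v')G(v')\,dv'$, so that, normalising the right-hand side to $1$, one is led to $G(v)=(K_+(v)-\alpha v)^{-1}$ together with the scalar constraint $F(\alpha)=1$, where $F(\alpha):=\int_V K_+(v)\,(K_+(v)-\alpha v)^{-1}\,dv$. I would study $F$ on $[0,\alpha^\ast)$, with $\alpha^\ast:=\inf_{v\in V_+,\,v>0}K_+(v)/v\ge 2K_{\min}>0$ the threshold below which $K_+(v)-\alpha v>0$ on $V$ (so that $G$ is positive and bounded). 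One checks $F(0)=1$, $F'(0)=\int_V v\,K_+(v)^{-1}\,dv<0$ by \textbf{(H3)}, $F''>0$ on $(0,\alpha^\ast)$, and $F(\alpha)\to+\infty$ as $\alpha\uparrow\alpha^\ast$ (the integrand acquiring a non-integrable singularity at the minimiser of $K_+(v)/v$). Convexity then gives a unique $\alpha\in(0,\alpha^\ast)$ with $F(\alpha)=1$ and fixes $G$. The resulting $W(x,v):=e^{-\alpha x}G(v)$ solves \eqref{eq:equilibrium} in the interior $\{x>0\}$ but fails the matching at $x=0$; it is only the expected far-field profile.

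Next I would construct $\g$ itself. By the desired symmetry $\g(x,v)=\g(-x,-v)$ it suffices to work on $x\ge0$; continuity of $\g(\cdot,v)$ across $x=0$ for each $v\neq0$, needed so that $v\partial_x\g$ carries no Dirac mass, together with the symmetry forces the specular reflection condition $\g(0,v)=\g(0,-v)$, $v>0$. Integrating \eqref{eq:equilibrium} in $v$ shows the flux $\int_V v\,\g\,dv$ is constant, hence $\equiv0$ for a decaying solution. Solving along characteristics --- transporting the reflection datum for $v>0$, and selecting the mode that decays at $+\infty$ for $v<0$ --- and inserting the result into $\rho(x):=\int_V K_+(v)\g(x,v)\,dv$ closes a positive linear integral equation $\rho=\mathcal T\rho$. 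In the weighted space $e^{\alpha x}L^\infty$ I would produce a nontrivial nonnegative fixed point by a Krein--Rutman / positivity argument, the relation $F(\alpha)=1$ being precisely what makes $\alpha$ the Malthusian rate of the renewal kernel, so that $\mathcal T$ admits the eigenvalue $1$. Strict positivity and boundedness of $\g$ then follow from the characteristic formulas, whose kernels are strictly positive and bounded, and the extension to $x<0$ by $\g(x,v):=\g(-x,-v)$ solves \eqref{eq:equilibrium} on all of $\RR\times V$, the reflection condition ensuring continuity across $x=0$.

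For the bound \eqref{g-est} I would pass to $h:=e^{\alpha x}\g$, which solves $v\partial_x h+(K_+(v)-\alpha v)h=\int_V K_+(v')h(x,v')\,dv'$. Here $K_+(v)-\alpha v>0$ gives genuine damping, $h\equiv G$ is the $x$-independent solution, and the characteristic representation yields $h(x,v)\asymp G(v)\,R(x)$ with $R(x):=\int_V K_+(v')h(x,v')\,dv'=e^{\alpha x}\rho(x)$. Thus \eqref{g-est} is equivalent to $R$ staying between two positive constants uniformly in $x$. The hard part is exactly this last point: the transport operator on $\{x>0\}$ carries a continuous, Case/Van Kampen--type spectrum alongside the discrete mode $W$, so $W$ is not an exact eigenfunction and a boundary layer forms at $x=0$; one must show that, in spite of it, $\g$ decays with the single clean rate $\alpha$ and $e^{\alpha x}\rho(x)$ neither vanishes nor blows up. I would settle this either through sharp estimates on the renewal kernel, whose Laplace transform has its relevant singularity precisely at $\alpha$ by $F(\alpha)=1$, or by squeezing $\g$ between $C^{-1}W$ and $C\,W$ via a comparison principle for the damped transport operator; checking such a comparison principle in the presence of the nonlocal term $\int_V K_+(v')\g\,dv'$ is the step I expect to demand the most care.
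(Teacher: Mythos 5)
Your first step (the dispersion relation, convexity of $F$, uniqueness of $\alpha$, and the profile $G$) matches the paper exactly, and your reduction to a half-space problem closed by the reflection condition $g(0,v)=g(0,-v)$ is also the paper's starting point. But the two load-bearing steps of your plan are not carried out, and neither works as stated. First, existence: you propose Krein--Rutman for the renewal operator $\mathcal{T}$ acting on $\rho$ in the weighted space $e^{\alpha x}L^\infty(\R_+)$. Krein--Rutman requires compactness, and on the unbounded half-line a renewal/Wiener--Hopf operator of this type is not compact: its essential spectrum, generated by the translation-invariant behaviour as $x\to+\infty$, is precisely what your ``Malthusian rate'' heuristic concerns, and the relation $F(\alpha)=1$ places the candidate eigenvalue $1$ at the edge of (not outside) that essential spectrum, so no positivity/spectral-gap argument applies directly. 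The paper circumvents this by applying Krein--Rutman not to an operator in the space variable but to the \emph{albedo} operator $\bB$ in the velocity variable: inflow data $\varphi\in \mathcal{C}^0(V_+)$ at $x=0$ is mapped to the (weighted) outgoing trace at $x=0$ of the bounded solution of the half-space Milne problem, and this operator \emph{is} compact, thanks to the velocity averaging lemma (the macroscopic moment $A(x)$ is H\"older continuous, hence the outgoing trace is uniformly continuous in $v$). The eigenvalue is then identified as $1$ not by a Malthusian argument but by flux conservation: integrating the equation over $\R_+\times V$ gives $0=(1-\lambda)\int_{V_+}v\,g(0,v)\,dv$, forcing $\lambda=1$.

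Second, and more seriously, you explicitly leave the two-sided bound \eqref{g-est} --- the main quantitative assertion of the theorem --- as ``the hard part,'' offering only speculative strategies. The missing idea that makes it easy is the right change of unknown: not $h=e^{\alpha x}g$ but $u=e^{\alpha x}g/G(v)$, i.e.\ dividing by the full far-field profile. Then $u$ solves the conservative equation $v\partial_x u+\int_V G^{-1}K_+'G'\left(u-u'\right)dv'=0$, whose nonlocal term has a nonnegative kernel and annihilates constants, so exactly the comparison principle you were worried about does hold: any bounded solution satisfies $\inf_{V_+}u(0,\cdot)\le u\le\sup_{V_+}u(0,\cdot)$, with uniqueness among bounded solutions obtained by comparison with the growing supersolution $U=(G/K)e^{\alpha x}$ (the trivial equilibrium $1/K$ in disguise). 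Since the Krein--Rutman eigenfunction $\varphi$ is continuous and positive on the compact set $V_+$, it is bounded above and below by positive constants, and \eqref{g-est} follows at once from this pointwise estimate. In short, the bound you correctly identify as the crux is, in the paper's formulation, an automatic consequence of the maximum principle for the Milne problem in the variable $u$; your transformation $h=e^{\alpha x}g$ yields a damped but non-conservative equation, which is what blocks that route.
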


In addition, we can state precisely the asymptotic behaviour of the stationary state $g$ as $x\to +\infty$, namely variable separates in the limit: $g(x,v) \sim H(g)e^{-\alpha x} G(v)$, where $H(g)$ denotes a constant. 
This is a specific feature of the so-called Milne problem in radiative transfer theory \cite{Bardos-Santos-Sentis,Golse-1987}. The analogy between the existence of a stationary state for equation \eqref{eq:equilibrium} and the Milne problem is the cornerstone of the proof of Theorem \ref{eq:existence st st}.  

\begin{proposition}\label{cor:milne}
The stationary state constructed in Theorem \ref{eq:existence st st} converges exponentially fast towards a multiple of the asymptotic profile $e^{-\alpha x} G(v)$ as $x\to +\infty$. More precisely, there exists a constant $C_0$ such that the following estimate holds true,
\begin{equation}\label{eq:exponential decay}
(\forall x\geq 0)\quad\left\| \dfrac{g(x,v)}{e^{-\alpha x}G(v)} - H(g) \right\|^2_{L^2(V;v^2G^2\, dv)} \leq C_0 e^{-\beta x}\,, 
\end{equation} 
where $\beta$ is the positive root of $ \frac12 \beta^2 + \alpha \beta - 2 \kappa = 0$, $\kappa = \left(\inf_{v\in V} \frac{K_+(v)}{v^2 G(v)} \right)^2\left(\int_{V}  v^2 G^2 \,dv\right)$, and the constant $H(g)$ is given by the formula:
\begin{equation}
H(g) = \dfrac{\int_V g(0,v) v^2 G(v) \, dv}{\int_V   v^2 G(v)^2 \, dv}\, .
\end{equation}
\end{proposition}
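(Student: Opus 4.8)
The plan is to factor out the separated profile and reduce \eqref{eq:equilibrium} on $\{x>0\}$ to a relaxation equation for a renormalised unknown, then to control a weighted $L^2$ energy by a second-order differential inequality whose characteristic equation is exactly $\tfrac12\beta^2+\alpha\beta-2\kappa=0$. Concretely, I would set $s(x,v):=e^{\alpha x}g(x,v)/G(v)-H(g)$, so that the squared norm in \eqref{eq:exponential decay} is $N(x):=\int_V s^2\,v^2G^2\,dv$. Writing $\rho:=\int_V K_+G\,dv$ and using that the profile satisfies $(K_+(v)-\alpha v)G(v)=\rho$ (the separation-of-variables relation underlying Theorem \ref{eq:existence st st}), a direct substitution turns \eqref{eq:equilibrium} into the relaxation form $vG\,\partial_x s=\rho\big(\int_V s\,dv'-s\big)$ on $x>0$.

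The second step is to record three integral identities. Integrating the profile relation over $V$ gives $\int_V vG\,dv=0$; integrating \eqref{eq:equilibrium} in $v$ shows the flux $\int_V v g\,dv$ is $x$-independent, and the exponential bound \eqref{g-est} forces it to vanish, hence $\int_V vG\,s\,dv=0$; finally, differentiating $\int_V s\,v^2G^2\,dv$ in $x$ and using the two previous identities shows it is conserved, so that with the stated formula for $H(g)$ one gets $\int_V s\,v^2G^2\,dv\equiv0$. These are precisely the orthogonality constraints that make $H(g)$ the correct limiting constant.

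Next I would differentiate $N$ twice. Writing $\bar s:=\int_V s\,dv$ and $Q(x):=\int_V s^2\,vG\,dv$, the relaxation equation and the flux identity give $N'=-2\rho Q$ and then $N''=4\rho^2\int_V(s-\bar s)^2\,dv$. The key coercivity step is an algebraic completion of squares: using $\int_V vG\,dv=0$ and $\int_V vG\,s\,dv=0$ one rewrites $Q=\int_V(s-\bar s)^2\,vG\,dv$, whence
\[
\tfrac12N''-\alpha N'-2\kappa N=2\Big(\rho^2\!\int_V(s-\bar s)^2dv+\alpha\rho\!\int_V(s-\bar s)^2vG\,dv-\kappa N\Big)=2\Big(\rho\!\int_V(s-\bar s)^2K_+G\,dv-\kappa N\Big),
\]
since $\rho+\alpha vG=K_+G$. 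Now $K_+\ge\big(\inf_V K_+/(v^2G)\big)v^2G$ gives $K_+G\ge\mu\,v^2G^2$ with $\mu:=\inf_V K_+/(v^2G)$; combined with $\int_V(s-\bar s)^2v^2G^2\,dv=N+\bar s^2I\ge N$ (which uses the constraint $\int_V s\,v^2G^2\,dv=0$, where $I:=\int_Vv^2G^2\,dv$) and with $\rho=\int_VK_+G\,dv\ge\mu I$, this yields $\rho\int_V(s-\bar s)^2K_+G\,dv\ge\mu^2IN=\kappa N$. Hence $N$ is a supersolution: $\tfrac12N''-\alpha N'-2\kappa N\ge0$ on $x>0$.

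Finally I would conclude by ODE comparison. Since $N''\ge0$, $N$ is convex, and being bounded (by \eqref{g-est}) it is non-increasing with $N(\infty)=0$. Because $e^{-\beta x}$ with $\beta>0$ solving $\tfrac12\beta^2+\alpha\beta-2\kappa=0$ is an exact solution of $\tfrac12y''-\alpha y'-2\kappa y=0$, the difference $w:=N-N(0)e^{-\beta x}$ satisfies $\tfrac12w''-\alpha w'-2\kappa w\ge0$ with $w(0)=0$ and $w(\infty)=0$; the maximum principle for this operator (zeroth-order coefficient $-2\kappa<0$) gives $w\le0$, i.e. \eqref{eq:exponential decay} with $C_0=N(0)$. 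The hard part is that the transport weight $vG$ is sign-indefinite and vanishes at $v=0$, so the first-order identity $N'=-2\rho Q$ is \emph{not} coercive; the remedy is to pass to the second-order inequality and exploit the completion of squares $\rho^2\!\int(s-\bar s)^2+\alpha\rho Q=\rho\!\int(s-\bar s)^2K_+G$, which converts the indefinite momentum $Q$ into a manifestly positive, coercive term and is where both $\alpha$ and $\kappa$ enter.
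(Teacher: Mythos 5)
Your proposal is, in substance, the paper's own argument in different packaging: your three integral identities are the paper's zero-flux relation \eqref{eq:conservation 2} and moment conservation \eqref{eq:conservation 3}, your $N$ is the paper's $E$, and your target inequality $\tfrac12 N''-\alpha N'-2\kappa N\ge 0$, with exactly the same $\kappa$, is the paper's damped second-order inequality. The paper derives it as a coupled first-order system, testing the Milne equation \eqref{eq:u} with $2K_+G^2(u-H)$ and $2vG^2(u-H)$ and bounding the symmetric Dirichlet form $\iint K_+G\,K_+'G'(u-u')^2\,dv'dv$ from below, whereas you differentiate $N$ twice; the two routes are algebraically identical, since the paper's $J$ equals $\rho Q+\alpha N$ with your $Q=\int_V s^2\,vG\,dv$. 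Your ending (comparison of $N$ with the exact solution $N(0)e^{-\beta x}$ via the maximum principle) is slightly cleaner than the paper's integrating-factor computation and yields the explicit constant $C_0=N(0)$.

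Two points need repair, one of them a genuine error as written. First, the ``relaxation form'' $vG\,\partial_x s=\rho\bigl(\bar s-s\bigr)$ does \emph{not} follow from direct substitution: inserting $g=e^{-\alpha x}G(s+H)$ into \eqref{eq:equilibrium} and using $(K_+-\alpha v)G=\rho$ gives
\begin{equation*}
vG\,\partial_x s=\int_V K_+'G's'\,dv'-\rho s\,,
\end{equation*}
i.e.\ relaxation toward the $K_+G$-weighted average of $s$, not toward the plain average $\bar s$; your claimed equation is false for generic solutions of \eqref{eq:u}. It does hold along the stationary solution, because $K_+G=\rho+\alpha vG$ together with the flux identity $\int_V vG\,s\,dv=0$ (which you derive in your second step) gives $\int_V K_+'G's'\,dv'=\rho\bar s+\alpha\int_V vG\,s\,dv=\rho\bar s$. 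Once this observation is inserted, your formulas for $N'$, $N''$ and the completion of squares are all correct, so the gap is one of justification and logical order, fixable with material you already have --- but the first step must be restated. Second, your comparison argument uses $N(\infty)=0$, which you assert without proof; boundedness and convexity only give that $N$ is non-increasing with some limit $L\ge 0$. One line closes this: convexity, boundedness and monotonicity force $N'\to 0$, and if $L>0$ then $\tfrac12 N''\ge \alpha N'+2\kappa N$ would give $N''\ge 2\kappa L>0$ for all large $x$, contradicting $N'\le 0$. With these two patches the proof is complete and coincides, constant for constant, with the paper's.
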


\begin{remark}
The higher-dimensional case is left open. We refer to \cite{Filbet-Yang} for accurate numerical simulations of \eqref{kinetic} in the two-dimensional case, where the velocity set is the unit sphere $\mathbb{S}^1$. In this work the authors clearly observe convergence towards a spherically symmetric stationary state. 
\end{remark}

\begin{figure}
\begin{center}
\includegraphics[width = 0.48\linewidth]{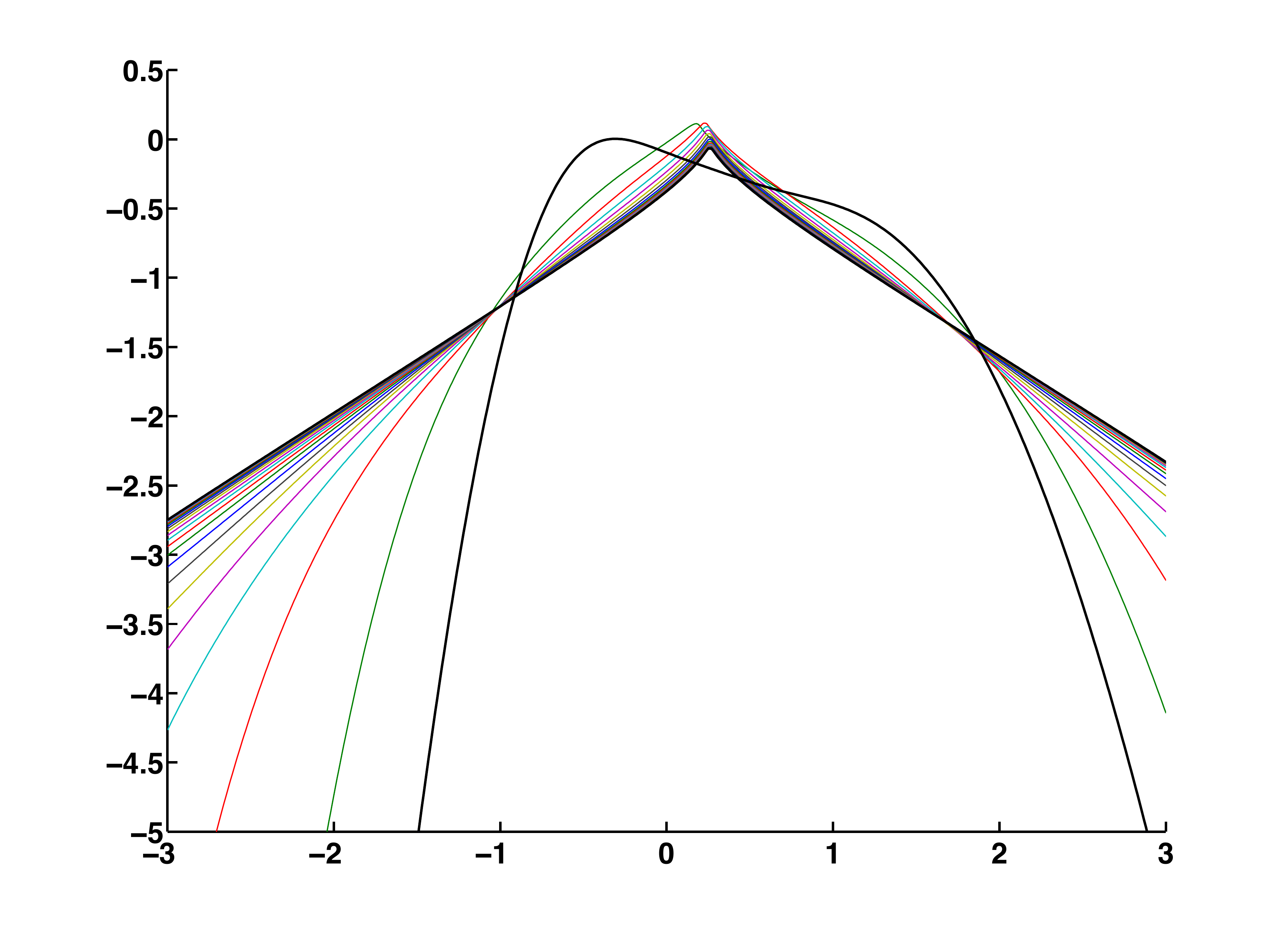}\;
\includegraphics[width = 0.48\linewidth]{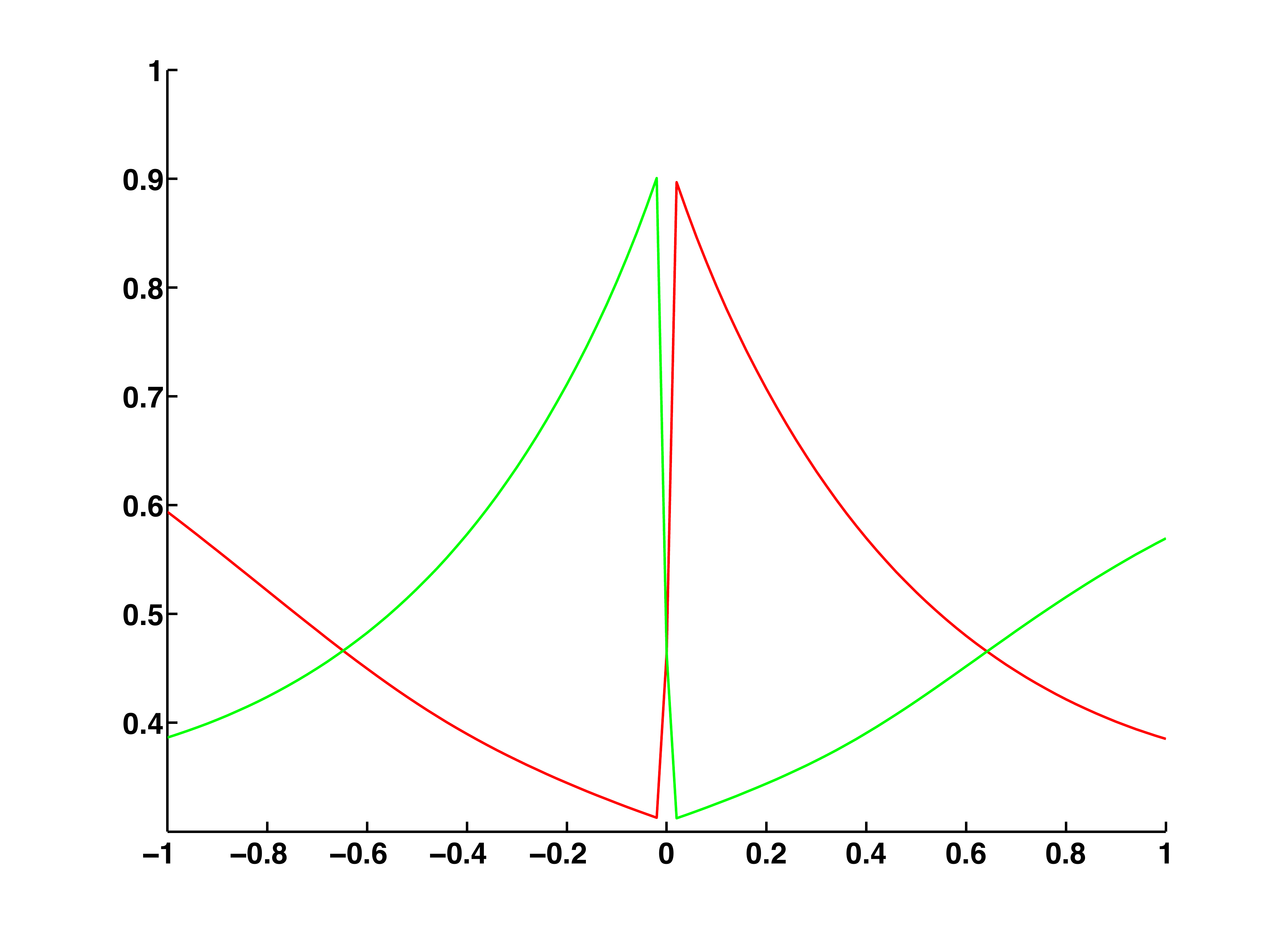}
\caption{\small Numerical simulation of \eqref{kinetic} for the tumbling kernel \eqref{eq:example}. The domain of computation is a box $[-L,L]\times V$ with specular reflection. (Left) The macroscopic density $\int_V f(t,x,v)\, dv$ is plotted in logarithmic scale for successive times. Notice the exponential decay in space of the final state. (Right) The renormalized velocity profiles at $x = \pm L$ (resp. red or green curve) are close to the expected profile $G(v)$. Note the discontinuity at $v = 0$. See also \cite[Section 9.4]{Gosse-book} for much more accurate simulations using a well-balanced scheme.}
\label{fig:simu}
\end{center}
\end{figure}

\begin{proof}[Proof of Theorem \ref{eq:existence st st}]
The proof is divided into several steps. The first step consists in deriving the correct asymptotic behavior of $\g$ as $|x|\to +\infty$. In the second step we make the link between our stationary problem and the so-called conservative Milne problem in the half-space \cite{Bardos-Santos-Sentis}. The core of the proof is a regularization process in the velocity variable at $x = 0$ (step 3), which enables us to apply the Krein-Rutman Theorem (step 4). 

The proof shares some similarity with analogous problems in homogenization theory (see for instance \cite{Bal,Goudon-Mellet}). However the connection with the Milne problem is new up to our knowledge. 

\paragraph{Step\#1. Exponential decay as $x\to +\infty$.}
We make the following ansatz
\[ \g(x,v)\sim e^{-\alpha x}  G(v)\quad \text{as}\; x\to +\infty\,.\]
Substituting this ansatz in \eqref{eq:equilibrium} yields
\begin{align}
&-\alpha v  G(v) +  K_+(v)  G(v) = \int_V  K_+(v')  G(v')\, dv'\, , 
\label{eq:dispersion1}
\\
&  G(v) = ( K_+(v) - \alpha v)^{-1} \int_V  K_+(v')  G(v')\, dv' \, . \nonumber
\end{align}
Clearly the profile $G$ is characterized up to a constant factor. We opt w.l.o.g. for the renormalization $
\int_V  K_+(v)  G(v)\, dv  = 1$.
Therefore the exponent $\alpha$ is characterized by the dispersion relation:
\begin{equation} \label{alpha-equ}
   \int_V  K_+(v)( K_+(v) - \alpha v)^{-1}\, dv = 1 \, . 
\end{equation}
We now prove that this defines a unique $\alpha > 0$ such that $(\forall v\in V)\;K_+(v) - \alpha v  > 0$. 
We introduce the auxiliary function $J(\alpha) = \int_V  K_+(v)( K_+(v) - \alpha v)^{-1}\, dv$. It satisfies
\begin{enumerate}[(i)]
\item $J(0) = 1$ and
\[ \lim_{\alpha \to \underset{v> 0}{\inf} \left(v^{-1}  K_+(v)\right) } J(\alpha) = \lim_{\alpha \to \underset{v< 0}{\sup} \left( v^{-1}  K_+(v)\right)} J(\alpha) = +\infty\, .\]
\item $J'(0) = \int_V v ( K_+(v))^{-1}\, dv < 0$ (the average speed is negative on the far right: this is the confinement effect).
\item The function $J$ is convex on the admissible range of $\alpha$.
\end{enumerate}
As a consequence there exists a unique $\alpha \in \left(0,\underset{v> 0}{\inf} \left(v^{-1}  K_+(v)\right)\right)$ such that $J(\alpha) = 1$.

By symmetry, a similar ansatz can be made on the left side: $\g(x,v)\sim   e^{\alpha x}  G(-v)$ as $x\to -\infty$.

\paragraph{Step\#2. Connection with the conservative Milne problem.}

For $x\geq 0$ we define $\ub$ by  $\g(x,v) = e^{-\alpha x} G(v) \ub(x,v)$. We deduce from \eqref{eq:equilibrium} that it satisfies the following equation,
\begin{equation}   v\partial_x \ub + \int_{V} G^{-1} K_+' G'\left( \ub - \ub' \right) \, dv'  = 0\, ,\quad (x,v)\in \R_+\times V\, . \label{eq:u} \end{equation}
The profile $\g(0,v) = G(v) \ub(0,v)$ is unknown, of course. On the other hand, its knowledge is sufficient to reconstruct the entire function $\ub$. This is the purpose of the Milne problem. It states that for a given profile $u(0,v)$ defined for $v>0$ only, there exists a unique bounded function $u$, defined over $\RR_+\times V$, solution of  \eqref{eq:u}. 

\begin{lemma}\label{lem:Milne}
Let $\varphi \in L^\infty(V_+)$. Then there exists a unique bounded solution $u(x,v)$ of the Milne problem
\begin{equation}\label{eq:BVP u}
\left.\begin{array}{ll}
\displaystyle v\partial_x u + \int_{V} G^{-1} K_+' G'\left( u - u' \right) \, dv'  = 0\, ,\quad & (x,v)\in \R_+\times V \,,\smallskip\\
u(0,v) = \varphi(v) \, , \quad & v\in V_+ \,,
\end{array}\right\}
\end{equation}
satisfying the pointwise estimate,
\begin{equation}\label{u-est}
\inf_{V_+}\varphi \le u(x,v)\le \sup_{V_+}\varphi \,,\qquad (x,v)\in \R^+\times V\,.
\end{equation}
\end{lemma}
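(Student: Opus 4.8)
\emph{Plan.} The plan is to read \eqref{eq:BVP u} as the conservative Milne problem in the half-space and to build its solution by slab truncation, the two-sided bound \eqref{u-est} following from a comparison principle. The first step is to recast the equation in relaxation (BGK) form. With the Step~1 normalization $\int_V K_+(v)G(v)\,dv=1$ and the notation $\rho(x):=\int_V K_+(v')G(v')u(x,v')\,dv'$, the collision term collapses to $\int_V G^{-1}K_+'G'(u-u')\,dv'=(u-\rho)/G$, so \eqref{eq:BVP u} reads $v\partial_x u=(\rho(x)-u)/G(v)$: a transport equation relaxing $u$ towards the $\mu$-average $\rho$, where $d\mu(v):=K_+(v)G(v)\,dv$ is a probability measure. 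Integrating along characteristics gives the mild representation $u(x,v)=e^{-x/(vG)}\varphi(v)+\int_0^x \frac{1}{vG}e^{-(x-s)/(vG)}\rho(s)\,ds$ for $v>0$, and, imposing boundedness as $x\to\infty$, $u(x,v)=\int_x^\infty \frac{1}{|v|G}e^{-(s-x)/(|v|G)}\rho(s)\,ds$ for $v<0$. In both cases the kernels are nonnegative of total mass $1$, so $u(x,v)$ is a convex combination of the incoming datum $\varphi(v)$ (present only for $v>0$) and of values $\{\rho(s)\}$; since $\rho(x)$ is itself the $\mu$-average of $v\mapsto u(x,v)$, these representations close into a single fixed-point equation $\rho=F+T_0\rho$ on $\RR_+$.

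The second step is existence, obtained by the classical slab-truncation construction for the conservative Milne problem \cite{Bardos-Santos-Sentis,Golse-1987}. One solves on $[0,L]$ with $\varphi$ prescribed on $V_+$ at $x=0$ and a reflecting condition $u^L(L,v)=u^L(L,-v)$ for $v<0$ at $x=L$ (chosen because it preserves the constant solutions and hence the order structure); solvability on the compact slab follows from the Fredholm alternative for the transport--relaxation operator, or after a small resolvent shift $u\mapsto u+\lambda u$ that turns $T_0$ into a strict contraction. Because the constants $c_-:=\inf_{V_+}\varphi$ and $c_+:=\sup_{V_+}\varphi$ are exact solutions of \eqref{eq:BVP u} and the construction uses only nonnegative kernels (hence is order preserving), comparison yields $c_-\le u^L\le c_+$ uniformly in $L$. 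The family $\{u^L\}$ is thus bounded in $L^\infty$; I extract a weak-$*$ limit $u$ and pass to the limit in the mild representation — the velocity averaging defining $\rho$ being stable under this limit — to obtain a bounded solution of \eqref{eq:BVP u} satisfying \eqref{u-est}.

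The third step delivers the comparison principle for \emph{all} bounded solutions, from which uniqueness is immediate. For any two bounded solutions the difference $\delta$ has vanishing incoming data on $V_+$ and solves the homogeneous equation; by the mild formula for $v>0$ and $v<0$ one has $\delta(x,v)$ expressed through $\rho_\delta(x):=\int_V K_+G\,\delta\,dv$, so it suffices to show $\rho_\delta\equiv0$. Here I use the conserved flux $\int_V v\,K_+G^2\,\delta\,dv$ (obtained by testing the equation against $w=K_+G^2$, for which $\int \tfrac{w}{G}(\delta-\rho_\delta)\,dv=0$) to exclude a nontrivial bounded homogeneous mode, combined with the convex-combination structure which forces the supremum of $\delta$ to be controlled by the incoming datum. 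Applying this to $\varphi\equiv0$ gives $0=\inf_{V_+}0\le\delta\le\sup_{V_+}0=0$, hence uniqueness; applied to general data it reproves \eqref{u-est} for the solution constructed above.

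The main obstacle is exactly the conservative, gapless structure: on the half-line the relaxation kernel is stochastic, so $\int_0^\infty\mathcal K(x,s)\,ds=1-\int_{V_+}K_+G\,e^{-x/(vG)}\,dv\to1$ as $x\to\infty$, and the fixed point $\rho=F+T_0\rho$ is only marginally non-expansive — one cannot contract directly. This is precisely why the slab truncation (which restores compactness and a clean comparison with constant solutions) and the uniform-in-$L$ bound are needed. A secondary difficulty is the grazing set $v\to0^+$, where $vG(v)\to0$ so that the relaxation time degenerates and $e^{-x/(vG)}$ collapses; ensuring that $\rho$ remains bounded and that the convex-combination argument for \eqref{u-est} is not spoiled near $v=0$, together with ruling out a bounded homogeneous solution via the conserved flux, is the technical heart of the comparison principle.
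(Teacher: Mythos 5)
Your Step 2 (existence) is essentially sound and is a standard route for the Milne problem; it parallels the paper's argument with a different regularization. The paper adds an absorption term $\eps u_\eps$ on the whole half-line, which makes the Duhamel fixed-point map for the macroscopic average a strict contraction of rate $\sup_{v\in V}(1+\eps G(v))^{-1}<1$, derives the uniform bound from the maximum principle, and passes to the weak-$*$ limit as $\eps\to 0$; your slab truncation with reflection at $x=L$ plays exactly the same role, and the uniform comparison with constant solutions plus the passage to the limit in the mild formulation is acceptable (modulo the well-posedness details of the slab problem, which you only gesture at).

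The genuine gap is your Step 3, i.e.\ uniqueness and the validity of \eqref{u-est} for an \emph{arbitrary} bounded solution. Your argument is circular: the clause ``the convex-combination structure \dots forces the supremum of $\delta$ to be controlled by the incoming datum'' is precisely the comparison principle you are supposed to prove, and, as you yourself observe, on the half-line the kernels are merely stochastic, so the mild representation yields nothing better than $\sup|\delta|\le\max\{0,\sup|\rho_\delta|\}\le\sup|\delta|$ --- non-expansiveness, not a contraction. The only other tool you invoke, the \emph{linear} conserved flux $\int_V vK_+G^2\delta\,dv$, cannot rule out a bounded homogeneous mode by itself: it only says that a single scalar is independent of $x$. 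What is missing is one of the following two arguments. (a) The paper's barrier argument, which is the key idea your proposal lacks: the function $U(x,v)=e^{\alpha x}/\bigl(K_+(v)G(v)\bigr)$, i.e.\ the image of the trivial solution $g=1/K_+$ of \eqref{eq:equilibrium} under the change of unknown $g=e^{-\alpha x}Gu$, solves the same equation as $u$ (this uses $G=(K_+-\alpha v)^{-1}$, a consequence of \eqref{eq:dispersion1} and the normalization) and grows exponentially in $x$; hence for any bounded solution $u$ and any $\eps>0$ the function $u-\eps U$ attains its maximum on $\R_+\times V$, the interior maximum principle forces that maximum to sit at $x=0$, and letting $\eps\to 0$ gives $u\le\sup_{V_+}\varphi$, likewise for $-u$; this proves \eqref{u-est} for every bounded solution, and uniqueness follows by linearity. (b) The quadratic dissipation identity of \cite{Bardos-Santos-Sentis}, which the paper uses to prove Proposition \ref{cor:milne}: multiplying the homogeneous equation by $2K_+G^2\delta$ gives, for $D(x):=\int_V vK_+G^2\delta(x,\cdot)^2\,dv$, the relation $\partial_x D=-\iint_{V\times V}K_+GK_+'G'(\delta-\delta')^2\,dv'\,dv\le 0$; zero incoming data gives $D(0)\le 0$, and since $\int_V vK_+G^2\,dv=\alpha\int_V v^2G^2\,dv>0$ (because $\int_V vG\,dv=0$), the boundedness and monotonicity of $D$ together with the integrability of the dissipation force $D\equiv 0$, hence $\delta$ independent of $v$, hence constant, hence zero. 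Either (a) or (b) is the ``technical heart'' you flagged; as written, your proposal names it but does not supply it, so the uniqueness claim of the lemma --- and the two-sided bound \eqref{u-est} for solutions not obtained by your construction --- remains unproven.
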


\begin{proof}[Proof of Lemma \ref{lem:Milne}] This result is classical (see \cite{Bardos-Santos-Sentis} and references therein). We recall the main lines of the proof for the sake of completeness. 

For $\eps>0$ we associate the perturbed problem
\begin{equation}\label{eq:u_e}
\left.\begin{array}{ll}
\displaystyle \eps u_\eps +  v\partial_x u_\eps + \int_{V} G^{-1} K_+' G'\left( u_\eps - u_\eps' \right) \, dv'  = 0\, ,\quad & (x,v)\in \R_+\times V \,,\smallskip\\
u_\eps(0,v) =  \varphi(v) \, , \quad & v\in V_+ \,,
\end{array}\right\}
\end{equation}
which possesses a unique solution in $L^\infty(\R_+\times V)$, as can be proven using a fixed point argument as explained below. The Duhamel formulation for \eqref{eq:u_e} reads
\begin{equation}\label{eq:Duhamel} u_\eps(x,v) = \begin{cases} 
\displaystyle \int_0^{x/v} G(v)^{-1}\exp\left(-\lambda_\eps(v)s \right)A_\eps(x - sv)\, ds +\exp\left(-\lambda_\eps(v)x/v\right) \varphi(v) \,, & v>0 \,,\\
\displaystyle \int_0^{+\infty}G(v)^{-1}\exp\left(-\lambda_\eps(v)s \right) A_\eps(x - sv)\, ds \,, & v<0 \,,
\end{cases} \end{equation}
where $\lambda_\eps(v) = G(v)^{-1}+\eps$, and the macroscopic quantity is defined by $A_\eps(x) = \int_{V} K_+(v') G(v') u_\eps(x,v')\, dv'$.
For a given inflow data $\varphi$, we associate the map $\bT_\eps$ from $L^\infty(\R_+)$ into itself,  
\[
A_\eps \longmapsto  \int_{V} K_+(v') G(v') u_\eps(\cdot,v')\, dv'\,, 
\]
where $u_\eps$ is defined by \eqref{eq:Duhamel}. Then $\bT_\eps$ is a contraction with rate
\[ \sup_{v\in V} \dfrac{1}{1+\eps G(v)} < 1\, . \]
The fixed point of this map is a solution of \eqref{eq:u_e}. 

The maximum principle applied to \eqref{eq:u_e} implies that $u_\eps$ satisfies \eqref{u-est}. Therefore, as $\eps\to 0$, we can extract 
a subsequence that converges  in $L^\infty(\R_+)$-weak$^*$ to a solution $u$ of \eqref{eq:BVP u}, which satisfies 
\begin{equation}
\|u\|_{L^\infty(\R_+\times V)} \leq \|\varphi\|_{L^\infty( V_+)}\,.
\label{eq:linfty bound}
\end{equation}
Next we show that any bounded solution of \eqref{eq:u} satisfies the estimate \eqref{eq:linfty bound}, implying uniqueness.  
We define $U(x,v) = \frac{G(v)}{K(v)}e^{\alpha x}$, which also satisfies the differential equation in \eqref{eq:BVP u}. It corresponds in fact to the function $g = \frac1K$, which is a trivial solution of \eqref{eq:equilibrium} for $x>0$.
For $\eps>0$ we consider $w = u - \eps U$. It satisfies \eqref{eq:u} with inflow data $\varphi_\eps = \varphi - \eps U(0,\cdot)$. Since $u$ is bounded and  $- \eps U \to -\infty$ as $x\to+\infty$, it is clear that $w$ has a maximum in $\R_+\times V$. Notice that $w$ is not necessarily continuous at $v = 0$, but piecewise continuity is sufficient. The maximum cannot be attained in $(0,\infty)\times V$ by the maximum principle. Therefore it is attained at $x=0$ and we get 
$$
 u(x,v)-\eps U(x,v)  \leq \sup_{V_+} (\varphi - \eps U(0,\cdot)) \,,\qquad (x,v)\in \R_+\times V\,.
$$ 
A similar estimate holds for $ -u - \eps U$. Letting $\eps\to 0$ we deduce that $u$ satisfies \eqref{u-est}, and therefore also \eqref{eq:linfty bound}. 
\end{proof}

\paragraph{Step\#3. Compactness.}

The expected symmetry property $\g(0,v) = \g(0,-v)$ of the equilibrium distribution motivates the definition of  the fixed point operator
$\bB:\, L^\infty(V_+) \to L^\infty(V_-)$:
\begin{equation}\label{eq:in-out}
(\bB\varphi)(v) = \dfrac{G(v)}{G(-v)} ({\mathcal A}\varphi)(v) \,,\qquad v\in V_- \,,
\end{equation}
with the Albedo operator $({\mathcal A}\varphi)(v)=u(0,v)$, where $u$ is the unique bounded solution of \eqref{eq:BVP u}.

\begin{lemma}\label{lem:compactness}
The operator $\bB$ is compact and positive. 
\end{lemma}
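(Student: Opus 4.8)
The plan is to prove the two assertions of Lemma~\ref{lem:compactness} separately, treating positivity first as the easy part and reserving compactness as the main work.

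\textbf{Positivity.} The operator $\bB$ is built from the Albedo operator $\mathcal{A}$ and multiplication by the strictly positive factor $G(v)/G(-v)$, so it suffices to check that $\mathcal{A}$ preserves positivity. By Lemma~\ref{lem:Milne}, if $\varphi \ge 0$ on $V_+$ then the unique bounded solution $u$ of \eqref{eq:BVP u} satisfies $0 = \inf_{V_+}\varphi \le u(x,v)$ pointwise; in particular $u(0,v) = (\mathcal{A}\varphi)(v) \ge 0$. Since $G(v)/G(-v) > 0$ by \textbf{(H1)} and \eqref{eq:dispersion1} (which forces $G > 0$), we conclude $\bB\varphi \ge 0$ whenever $\varphi \ge 0$.

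\textbf{Compactness.} The bound \eqref{u-est} already shows $\bB$ is bounded from $L^\infty(V_+)$ to $L^\infty(V_-)$, but boundedness is not compactness; the gain of regularity must come from the averaging structure of the transport equation. First I would return to the Duhamel representation \eqref{eq:Duhamel} in the limit $\eps \to 0$, which expresses $u(0,v)$ for $v < 0$ as an integral of the macroscopic quantity $A(x) = \int_V K_+(v')G(v')u(x,v')\,dv'$ against the kernel $G(v)^{-1}\exp(-\lambda(v)x/v)$, where $\lambda(v) = G(v)^{-1}$. The key observation is that $A$ is \emph{not} merely bounded but enjoys extra regularity: differentiating the defining relation and using \eqref{eq:u} shows that $A$ satisfies an equation of the form $A' = (\text{bounded})\,A$ together with the exponential decay inherited from Step~\#1, so $A \in W^{1,\infty}(\R_+)$ with an estimate controlled solely by $\|\varphi\|_{L^\infty(V_+)}$. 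Substituting this regularity back into the Duhamel formula for $v < 0$ produces, after differentiation in $v$, a uniform bound on $\partial_v (\bB\varphi)(v)$ on any set $V_- \cap \{v \le -\delta\}$ bounded away from the singular velocity $v = 0$.

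The main obstacle is therefore the behaviour near $v = 0^-$, where the characteristic speed degenerates and the exchange of information between the two half-lines concentrates; the estimate on $\partial_v$ blows up there and cannot be used directly. To handle this I would exploit that the exponential weight $\exp(-\lambda(v)x/v)$ decays \emph{faster} as $v \to 0^-$ because $|v|^{-1} \to +\infty$, so the contribution of small $|v|$ to any $L^2$- or $L^1$-norm of the image is uniformly small; concretely, one splits $V_-$ into $\{v \le -\delta\}$ and $\{-\delta < v < 0\}$, obtains equicontinuity on the former via the derivative bound and an Arzel\`a--Ascoli argument, and shows the latter contributes at most $o(1)$ uniformly in $\|\varphi\|_{L^\infty} \le 1$ as $\delta \to 0$. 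Combining these two facts establishes that the image of the unit ball under $\bB$ is totally bounded, hence $\bB$ is compact. A clean way to package this is to prove compactness as an operator into $L^p(V_-)$ for finite $p$ (where the degeneracy at $v=0$ is harmless under the weight) and then upgrade, or simply to work throughout in the weighted $L^2$ space in which the Krein--Rutman argument of Step~\#4 will be carried out.
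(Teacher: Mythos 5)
Your proposal contains a genuine gap at its central step: the claim that $A\in W^{1,\infty}(\R_+)$ with ``$A'=(\text{bounded})\,A$'' does not follow from \eqref{eq:u} and is false in general. Using the normalization $\int_V K_+G\,dv=1$, equation \eqref{eq:u} reads $v\partial_x u = G(v)^{-1}\bigl(A(x)-u(x,v)\bigr)$, so differentiating $A$ under the integral gives
\begin{equation*}
\partial_x A(x)=\int_V K_+(v)G(v)\,\partial_x u(x,v)\,dv=\int_V \frac{K_+(v)}{v}\bigl(A(x)-u(x,v)\bigr)\,dv ,
\end{equation*}
and the factor $1/v$ is not integrable on $V$; nothing forces $A(x)-u(x,v)$ to vanish at $v=0$ (indeed $u$ may jump there). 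The regularity that \emph{is} available from $u,\,v\partial_x u\in L^\infty$ is given by the one-dimensional velocity averaging lemma of Golse--Lions--Perthame--Sentis: $A\in \mathcal C^{0,\theta}(\R_+)$ for every $\theta<1$, with seminorm controlled by $\|\varphi\|_{L^\infty}$ --- H\"older, not Lipschitz. This is the missing idea, and it is exactly what the paper uses. Once you have it, your ``main obstacle'' at $v=0^-$ disappears: instead of differentiating in $v$, estimate the modulus of continuity directly from the Duhamel formula after the substitution $t=s/G(v)$, namely $u(0,v)=\int_0^{+\infty}e^{-t}A(-tvG(v))\,dt$ for $v\in V_-$, which yields
\begin{equation*}
|u(0,v_1)-u(0,v_2)|\le [A]_{\mathcal C^{0,\theta}}\left(\int_0^{+\infty}t^\theta e^{-t}\,dt\right)|v_1G(v_1)-v_2G(v_2)|^{\theta},
\end{equation*}
a uniform modulus of continuity on all of $V_-$, up to and including $v=0$; Arzel\`a--Ascoli then gives compactness in the uniform topology with no splitting at all. (Note also that as $v\to0^-$ the Duhamel kernel is a probability density in $s$, so $u(0,v)\to A(0)$: the small-$|v|$ region contributes little to $L^p$ norms only because its measure is small, not because of any extra decay of the kernel.) By contrast, your splitting argument can at best produce compactness of $\bB$ as an operator into $L^p(V_-)$, $p<\infty$; the proposed ``upgrade'' is unsubstantiated (compactness into $L^p$ does not imply compactness into $L^\infty$ or $\mathcal C^0$), and retreating to a weighted $L^2$ space breaks Step~\#4, since the positive cone of $L^2$ has empty interior and the Krein--Rutman theorem as invoked there requires working in $\mathcal C^0(V_+)$.

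There is a second, smaller gap in the positivity part. Via \eqref{u-est} you only show that $\bB$ preserves the nonnegative cone: $\varphi\ge0$ implies $\bB\varphi\ge0$. What the Krein--Rutman argument of Step~\#4 needs (simple dominant eigenvalue with a \emph{positive} eigenfunction) is the stronger statement that $\varphi\ge0$, $\varphi\not\equiv0$ forces $\bB\varphi$ to be strictly positive. The paper obtains this from the Duhamel representation: for such $\varphi$ one has $A(x)>0$ for $x>0$, whence $u(0,v)=\int_0^{+\infty}e^{-t}A(-tvG(v))\,dt>0$ for all $v\in V_-$. Your maximum-principle bound cannot see this, since it is compatible with $\bB\varphi$ vanishing on a set of positive measure.
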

\begin{proof}
To prove compactness, we first define the macroscopic quantity as above,
\[ A (x) = \int_{V} K_+(v') G(v') u (x,v')\, dv'\, . \]
We have $u\in L^\infty(\R_+\times V)$ and  from \eqref{eq:BVP u} also $v\partial_x u\in L^\infty(\R_+\times V)$. The one-dimensional averaging 
lemma \cite{Golse-Lions-Perthame-Sentis} implies $A\in \mathcal C^{0,\theta}(\R_+)$ for all $\theta\in (0,1)$, with the corresponding H\"older 
semi-norm bounded in terms of $\|\varphi\|_{L^\infty}$. We deduce from the Duhamel representation formula
$$
  u(0,v) = \int_0^{+\infty}G(v)^{-1}\exp\left(-G(v)^{-1}s  \right) A( - sv)\, ds
 = \int_0^{+\infty} e^{- t} A( -  t vG(v))\, dt\, ,\qquad v\in V_- \,,
$$
that $u(0,v)$ is uniformly continuous for $v<0$, with a modulus of continuity which depends only on $\|\varphi\|_{L^\infty}$ and the modulus of continuity of $vG(v)$ on $V_-$.  

Positivity is immediate from the Duhamel formula since $A(x)>0$ for $x>0$, as soon as $\varphi\geq 0$ and $\varphi\neq 0$. 
\end{proof}

\paragraph{Step\#4. Conclusion.}

In order to apply the Krein-Rutman Theorem \cite{Dautray-Lions}, we consider the restriction of $\bB$ to continuous functions on $V_+$, since the interior of $L^\infty(V_+)$ is empty. Lemma \ref{lem:compactness} also holds for $\bB|_{\mathcal C^0(V_+)}$. 

The Krein-Rutman theorem states that the operator $\bB|_{\mathcal C^0(V_+)}$ possesses a simple dominant eigenvalue $\lambda\in \R$ together with a positive eigenfunction $\varphi$: $\bB \varphi = \lambda \varphi$. The conservation property for \eqref{eq:equilibrium} yields $\lambda = 1$ by
the following argument: We denote by $\ub\in L^\infty(\R_+\times V)$ the solution of the Milne problem \eqref{eq:BVP u} with inflow data 
$\ub(0,v) = \varphi(v)$. We define accordingly $\g(x,v) = e^{-\alpha x} G(v) \ub(x,v)$. It is a solution of  \eqref{eq:equilibrium} on $\R_+\times V$ satisfying
\[ 
  \lambda \g(0,v) = \g(0,-v)\,,\qquad v\in V_+\, . 
\]
Integrating \eqref{eq:equilibrium} over $\R_+\times V$ yields
\begin{equation*}
0 = \int_V v\g(0,v)\, dv = (1 - \lambda) \int_{V_+} v\g(0,v)\, dv\,,
\end{equation*}
implying $\lambda = 1$ by the positivity of $\g$.  It is straightforward to check that $\g$, symmetrically extended to $\R\times V$, is a solution of 
\eqref{eq:equilibrium}, satisfying \eqref{g-est} as a consequence of \eqref{u-est}. 
\end{proof}

\begin{proof}[Proof of Proposition \ref{cor:milne}]
We adapt the method of \cite{Bardos-Santos-Sentis}. We shall use a quantitative energy/energy dissipation approach with respect to the space variable. First, we integrate \eqref{eq:u} against $K_+ G^2$, in order to derive a non trivial conservation,
\begin{equation}\label{eq:conservation 1}  
\partial_x \left(\int_V v  K_+(v)  G(v)^2 u(x,v) \, dv  \right) = 0\, . \end{equation}
This comes in addition to the zero-flux relation 
\begin{equation}\label{eq:conservation 2}  
\int_V v G(v) u(x,v)\, dv = 0\, ,  \end{equation}
which is a straightforward consequence of equation \eqref{eq:equilibrium} after integration with respect to the velocity variable. We observe that the relation \eqref{eq:conservation 1} combined with \eqref{eq:conservation 2} and \eqref{eq:dispersion1} yields 
\begin{align*}
cste = \int_V v  K_+(v)  G(v)^2 u(x,v) \, dv  & = \int_V v \left(  \alpha v G(v) + \int_V  K_+(v')  G(v')\, dv' \right) G(v)  u(x,v) \, dv \\
& = \alpha \int_V v^2 G(v)^2 u(x,v) \, dv\, .
\end{align*}

Secondly, we multiply \eqref{eq:u} by $2K_+ G^2   (u - H(u))$, where the constant $H(u)$ is defined such that 
\begin{equation}\label{eq:conservation 3} \int_V v^2   G(v)^2 u(x,v) \, dv  =  H(u) \int_V v^2   G(v)^2  \, dv \, . \end{equation}
We obtain 
\begin{align}
\partial_x \left(\int_V v  K_+ G^2 \left( u - H(u)  \right)^2  \, dv  \right) & = - \iint_{V\times V}   K_+  G  K_+'  G' \left( u - u'\right)^2  \, dv' dv \nonumber
\\
& \leq -   2 \left(\inf_{v\in V} \frac{K_+(v)}{v^2 G(v)} \right)^2 \left(\int_{V}  v^2 G^2 \,dv\right) \int_{V}  v^2 G ^2 \left( u - H(u)  \right)^2  \, dv \, , \label{eq:dissipation}
\end{align}
where we have expanded $\left( u' - u\right)^2 = \left( u' - H(u)\right)^2 + \left( u - H(u)\right)^2 + 2\left(H(u) - u\right)\left( u' - H(u)\right)$, and we have used the conservation \eqref{eq:conservation 3}. We define two auxiliary quantities,
\[ J(x) =\int_V v  K_+(v) G(v)^2 \left( u(x,v) - H(u)  \right)^2  \, dv \, , \quad E(x) = \int_{V}  v^2 G(v) ^2 \left( u(x,v) - H(u)  \right)^2  \, dv \, . \]
The dissipation inequality \eqref{eq:dissipation} reads $\partial_x J(x) + 2 \kappa E(x) \leq 0$. On the other hand, multiplying  \eqref{eq:u} by $2vG^2 (u - H(u))$, we get,
\begin{align*}
\partial_x\left( \int_V v^2 G^2 (u - H(u))^2 \, dv \right) & = - 2 \iint_{V\times V} v G K_+' G' (u - H(u))^2 \, dv' dv \\Ê
& \quad + 2 \iint _{V\times V} v G K_+' G' (u' - H(u))(u - H(u)) \, dv' dv \\
& =  2 \int_V vG \left( \alpha v G - K_+G \right)(u - H(u))^2 \,  dv\, ,
\end{align*}
where we have used \eqref{eq:dispersion1}, \eqref{eq:conservation 2} and $\int_V vG\, dv = 0$. This reads also $\partial_x E(x) = 2 \alpha E(x) - 2 J(x)$. All together this yields the damped second order inequality
\[ - \frac12 \partial^2_x E(x) + \alpha \partial_x E(x)  + 2 \kappa E(x) \leq 0\, . \]
Complemented with the information that $E$ is bounded, this is sufficient to prove that $E$ decays exponentially fast. In fact, let $\beta>0$ be the positive root of $ \frac12 \beta^2 + \alpha \beta - 2 \kappa = 0$, the function $\tilde E(x) = e^{\beta x} E(x)$ satisfies
\[ -\frac12 \partial^2_x \tilde E(x) + (\alpha + \beta) \partial_x \tilde E(x)\leq 0\, . \]
Therefore, there exists a constant $C_0$, depending only on $E(0)$ and $\partial_x E(0)$, such that 
\[ -  \partial_x \tilde E(x) + 2 (\alpha + \beta)   \tilde E(x) \leq C_0\, . \]
We deduce that for all $0\leq x<y$, we have
\[ \tilde E(x) e^{-2(\alpha + \beta) x} - \tilde E(y) e^{-2(\alpha + \beta) y} \leq \frac{C_0}{2(\alpha + \beta)} \left( e^{-2(\alpha + \beta)x} -  e^{-2(\alpha + \beta)y} \right)\, . \]
Hence we conclude that,
\[ E(x) \leq E(y) e^{-(2\alpha + \beta)(y-x)} + \frac{C_0}{2(\alpha + \beta)}   e^{- \beta x}\, . \]
Since $E(y)$ is uniformly bounded, letting $y\to +\infty$ this yields that $E(x)$ decays exponentially fast, {\em i.e.} the estimate \eqref{eq:exponential decay} holds true up to a modification of the constant $C_0$, and the abuse of notation $H(g) = H(u)$.
\end{proof}

\section{Decay to equilibrium by hypocoercivity}

Since \eqref{kinetic} conserves the total mass, we expect $\lim_{t\to\infty} f(\cdot,\cdot,t) = \mu_\infty g$ (with $g$ solving 
\eqref{eq:equilibrium}), where the constant $\mu_\infty$
is chosen such that $\mu_\infty \iint_{\RR\times V} g(x,v)dv\,dx= \iint_{\RR\times V} f(x,v,0)dv\,dx$. Replacing $f$ by $f-\mu_\infty g$, 
we may assume $\mu_\infty = 0$, and therefore 
$$
  \iint_{\RR\times V} f(x,v,t)dv\,dx= 0  \,,\qquad\mbox{for } t\ge 0 \,, 
$$
in the following.

The decay to equilibrium will be proved by employing the abstract procedure of \cite{DMS}. It is based on a situation, where the equilibrium
lies in the intersection of the null spaces of the collision and the transport operator. However, the equilibrium $g$ 
is not in the null spaces of the collision operator $\Q f := \int_V K'f'dv' -Kf$ and of the transport operator $v\partial_x$. Therefore, as a first step,
collision and transport operators will be redefined.

Multiplying \eqref{kinetic} by $f/g$, we get:
\begin{equation*}
\dfrac{1}{2}\,\dfrac d{dt} \|f\|^2 = -  \frac12\iiint_{\RR\times V^2} g' K' \left( \dfrac{f'}{g'} - \dfrac{f}{g}\right)^2 dv'dv\,dx \,,
\end{equation*}
where $\|\cdot\|$ is the norm on 
\begin{equation}\label{def:H}
  \mathcal{H} := \left\{ f\in L^2\left(\frac{dv\,dx}{g}\right):\, \int_{\RR\times V} f\,dv\,dx=0 \right\} \,,
\end{equation} 
induced by the scalar product
\begin{equation*}
  \langle f_1,f_2\rangle := \iint_{\RR\times V} \dfrac{f_1 f_2}{g} dv\,dx \,.
\end{equation*}
This motivates the definition of the symmetrized collision operator
\begin{equation} \label{def-L}
 \L f := \int_V \dfrac{g' K' + gK}{2}\left( \dfrac{f'}{g'} - \dfrac f g\right)\, dv' \,,
\end{equation}
with the same dissipation:
\begin{equation*}
  \langle \L f_1,f_2\rangle = \langle f_1,\L f_2\rangle \,,\qquad  \dfrac{1}{2}\,\dfrac d{dt} \|f\|^2 = \langle \L f,f \rangle \,.
\end{equation*}
Hence we rewrite \eqref{kinetic} as $\partial_t f + \T f = \L f$ with the modified transport operator
\begin{equation} \label{def-T}
  \T f := v\partial_x f + \dfrac{1}{2} \int_V\left(Kf-K'f' + Kg\dfrac{f'}{g'} - K'g' \dfrac fg \right)dv'\,.
\end{equation}
It is easily checked that $\T g = \L g = 0$ and that $\T$ is skew symmetric with respect to $\langle\cdot,\cdot\rangle$. 
Obviously, for fixed $x$, the null space of $\L$ is spanned by $g(x,\cdot)$. The orthogonal projection to $\mathcal{N}(\L)$ is given by
\begin{equation*}
  \Pi f := \rho_f \dfrac{g}{\rho_g} \,,\qquad\mbox{with } \rho_f := \int_V f\,dv \,.
\end{equation*}
We also observe
\begin{equation} \label{rhoTf}
  \rho_{\T f} = \partial_x \int_V v f\, dv\,, 
\end{equation}
implying $\Pi\T\Pi=0$ (by the property $\int_V vg\,dv = 0$ of the equilibrium distribution), which is Assumption (H3) in the abstract
setting of \cite{DMS}, Section 1.3. The so called 'microscopic coercivity' Assumption (H1) is the subject of the following result.

\begin{lemma} \label{lem:micro-coercivity}
With the above definitions, $-\langle \L f, f\rangle \ge K_{min}\|(1-\Pi)f\|^2$ holds with $K_{min}=\inf_{\RR\times V} K$ for every 
$f\in \mathcal{H}$.
\end{lemma}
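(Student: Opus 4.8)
The plan is to observe that the inequality decouples over $x$ and reduces to a weighted Poincaré inequality in $v$, and then to prove that inequality with the sharp constant by keeping the collision frequency attached to $g$ throughout, invoking the equilibrium equation only at the very end.

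First I would make both sides explicit. Writing $h:=f/g$ and symmetrising the $v\leftrightarrow v'$ integral in \eqref{def-L}, one gets (in agreement with the dissipation identity stated just before \eqref{def:H})
\[
  -\langle \L f,f\rangle=\tfrac12\iiint_{\RR\times V\times V}\frac{gK+g'K'}{2}\,(h-h')^2\,dv'\,dv\,dx=\tfrac12\iiint_{\RR\times V\times V} g'K'\,(h-h')^2\,dv'\,dv\,dx .
\]
Since $\Pi$ is the orthogonal projection onto $\mathcal N(\L)=\mathrm{span}\,g$, the right-hand side is
\[
  \|(1-\Pi)f\|^2=\int_\RR\Big(\int_V g\,h^2\,dv-\frac{\rho_f^2}{\rho_g}\Big)dx=\int_\RR\int_V g\,(h-\bar h)^2\,dv\,dx ,\qquad \bar h:=\frac{\rho_f}{\rho_g}.
\]
As both quantities are $x$-integrals of nonnegative densities, it suffices to prove, for fixed $x$, the pointwise inequality $\tfrac12\iint_{V\times V} g'K'(h-h')^2\,dv'\,dv\ge K_{\min}\int_V g(h-\bar h)^2\,dv$.

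Next I would split the Dirichlet form by integrating one velocity against Lebesgue measure. Using $|V|=1$, for fixed $v'$ one has $\int_V (h-h')^2\,dv=\mathrm{Var}(h)+(h'-\langle h\rangle)^2$, where $\langle h\rangle:=\int_V h\,dv$ and $\mathrm{Var}(h):=\int_V h^2\,dv-\langle h\rangle^2$, whence
\[
  \tfrac12\iint_{V\times V} g'K'\,(h-h')^2\,dv'\,dv=\frac{A}{2}\,\mathrm{Var}(h)+\frac12\int_V gK\,(h-\langle h\rangle)^2\,dv ,\qquad A:=\int_V gK\,dv .
\]
In the second term I bound $gK\ge K_{\min}g$; in the first I bound $A\ge K_{\min}\,g(x,v)$ pointwise and use $\mathrm{Var}(h)=\int_V(h-\langle h\rangle)^2\,dv$. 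Each term is then at least $\tfrac{K_{\min}}{2}\int_V g\,(h-\langle h\rangle)^2\,dv$, and since $\bar h$ minimises $c\mapsto\int_V g(h-c)^2\,dv$, their sum is at least $K_{\min}\int_V g(h-\bar h)^2\,dv$, which is the claim.

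The whole argument rests on the pointwise estimate $K_{\min}\,g(x,v)\le\int_V g(x,v')K(x,v')\,dv'$, and this is the step I expect to be the main obstacle. It cannot come from $K\ge K_{\min}$ alone: collapsing $gK$ to $K_{\min}g$ before the splitting costs a factor two, and the resulting comparison between the unweighted variance $\mathrm{Var}(h)$ and the $g$-weighted variance genuinely fails for profiles $g(x,\cdot)$ that concentrate in $v$. One must instead use that $g$ solves \eqref{eq:equilibrium}, which reads $Kg=A-v\partial_x g$ with $A=A(x)=\int_V gK\,dv$, so that $A-K_{\min}g=(K-K_{\min})g+v\partial_x g$; the required statement is that this is nonnegative, i.e. that the constructed equilibrium never exceeds $A/K_{\min}=\sup_v(A/K)$, the maximum of the local collisional equilibrium $1/K$. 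I would derive this from the maximum-principle bounds \eqref{u-est} and the two-sided profile estimate \eqref{g-est} underlying the construction of $g$; as a consistency check, for the separated far-field profile $G(v)\propto(K_+(v)-\alpha v)^{-1}$ one computes $\sup_v g=A/K_{\min}$, attained at $v=0^-$, so the inequality is sharp and $K_{\min}$ is indeed the optimal microscopic coercivity constant.
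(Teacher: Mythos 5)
Your reduction to a fixed-$x$ inequality, the exact splitting
\begin{equation*}
\frac12\iint_{V\times V} g'K'\,(h-h')^2\,dv'\,dv \;=\; \frac{A}{2}\,\mathrm{Var}(h)+\frac12\int_V gK\,(h-\langle h\rangle)^2\,dv,
\qquad A=\int_V gK\,dv,
\end{equation*}
and the final passage from the flat mean $\langle h\rangle$ to the weighted mean $\bar h$ are all correct, and they would indeed produce the constant $K_{\min}$. But the whole argument then hangs on the pointwise bound $K_{\min}\,g(x,v)\le A(x)$, which you do not prove, and which is in fact \emph{false} under \textbf{(H1--H4)}. Test it in the far field: by Proposition \ref{cor:milne}, $g(x,\cdot)$ is asymptotically proportional to $e^{-\alpha x}G$ with $G(v)=(K_+(v)-\alpha v)^{-1}$, normalized by $\int_V K_+G\,dv=1$, and for this profile your bound is equivalent to $K_+(v)-\alpha v\ge K_{\min}$ for all $v\in V$. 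This is automatic for $v\le 0$, but it fails whenever $K_{\min}=\inf_V K_+$ is attained at an \emph{outgoing} velocity $v_0>0$, since then $K_+(v_0)-\alpha v_0=K_{\min}-\alpha v_0<K_{\min}$. Such kernels are admissible: take $K_+\equiv 1+\chi$ on $V_+$ except for a narrow continuous dip below $1-\chi$ near $v_0$, and $K_+\equiv 1-\chi$ on $V_-$; a sufficiently narrow dip preserves \textbf{(H3)}, and \textbf{(H1)}, \textbf{(H2)}, \textbf{(H4)} hold as well. Even for the model kernel \eqref{eq:example}, where the infimum sits on incoming velocities, your bound amounts to the quantitative statement $\alpha\le 4\chi$ about the root of \eqref{alpha-equ} — true, but nowhere proved in your sketch — and it would then still have to be transferred from the limiting profile to the actual $g$ at every finite $x$. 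The tools you invoke for this cannot do it: \eqref{u-est} and \eqref{g-est} control $g$ only up to multiplicative constants $C>1$, whereas your inequality needs a comparison with constant exactly one.

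The paper's proof avoids this obstruction entirely, and that is its point: it bounds the symmetrized weight $\frac{gK+g'K'}{2}\ge K_{\min}\frac{g+g'}{2}$, replaces $f$ by $(1-\Pi)f$ inside the square (allowed, since $\Pi f/g$ does not depend on $v$), and expands; with respect to the measure $(g+g')\,dv'\,dv\,dx$ the cross terms vanish identically because $\int_V(1-\Pi)f\,dv=0$, and discarding the two positive terms $g\,(h')^2+g'\,h^2$ finishes the proof using nothing about $g$ beyond positivity. Two remarks on constants. First, with correct bookkeeping this route yields $\frac{K_{\min}}{2}\|(1-\Pi)f\|^2$ rather than $K_{\min}\|(1-\Pi)f\|^2$: since $-\langle\L f,f\rangle=\frac14\iiint(gK+g'K')(h-h')^2\,dv'\,dv\,dx$ (your own first display), the first inequality in the paper's chain overstates by a factor $2$. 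Second, the sharp constant you are chasing is genuinely out of reach under \textbf{(H1--H4)}: for the dip kernel above, a trial function concentrated in $v$ near $v_0$, placed at large $x$, has Rayleigh quotient tending to $\frac12(K_{\min}-\alpha v_0)+\frac12 K_{\min}=K_{\min}-\frac{\alpha v_0}{2}<K_{\min}$, so no argument can recover $K_{\min}$ in general. Since any positive constant suffices for the hypocoercivity theorem, the paper's cruder but robust estimate is the right tool; your approach could at best be completed for special kernels such as \eqref{eq:example}, and only after proving $\alpha\le4\chi$ and a pointwise (not merely asymptotic) version of $K_{\min}g\le A$.
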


\begin{proof}
\begin{eqnarray*}
  -\langle \L f, f\rangle &\ge& \dfrac{K_{min}}{2} \iiint_{\RR\times V^2} (g+g') \left( \frac{(1-\Pi)f'}{g'} - \frac{(1-\Pi)f}{g}\right)^2 dv'dv\,dx \\
  &=& \dfrac{K_{min}}{2} \iiint_{\RR\times V^2} (g+g') \left( \left(\frac{(1-\Pi)f'}{g'}\right)^2 + \left(\frac{(1-\Pi)f}{g}\right)^2 \right)dv'dv\,dx \\
  &\ge& \dfrac{K_{min}}{2} \iiint_{\RR\times V^2}  \left( \frac{(1-\Pi)f'^2}{g'} + \frac{(1-\Pi)f^2}{g} \right)dv'dv\,dx = K_{min} \|(1-\Pi)f\|^2 \,.\\
\end{eqnarray*}
\end{proof}

The next step is 'macroscopic coercivity' (Assumption (H2) in \cite{DMS}). It relies on the asymptotic behavior of $g$ as $|x|\to\infty$.
By Theorem 1, the equilibrium distribution satisfies
\begin{equation} \label{g-ass1}
   0 < u_{min} e^{-\alpha|x|} \le g(x,v) \le u_{max} e^{-\alpha |x|} \,,\qquad \mbox{for } (x,v)\in \RR\times V \,,
\end{equation}
where $\alpha$ is the unique positive solution of the dispersion relation \eqref{alpha-equ}, and $u_{min}, u_{max}$ are positive
constants.

\begin{lemma} \label{lem:macro-coercivity}
Let \eqref{g-ass1} hold. Then, with the above definitions, there exists a constant $\lambda_M>0$, such that 
$\|\T\Pi f\|^2 \ge \lambda_M \|\Pi f\|^2$ holds for all $f\in \mathcal{H} \cap \mathcal{D}(\T\Pi)$.
\end{lemma}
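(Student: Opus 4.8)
The plan is to turn this operator inequality into a one-dimensional weighted Poincar\'e inequality for the hydrodynamic field. Since $\Pi f = \rho_f g/\rho_g$, I write $\Pi f = \sigma g$ with $\sigma(x) := \rho_f(x)/\rho_g(x)$ depending on $x$ only, and substitute $f = \sigma g$ into the definition \eqref{def-T} of $\T$. Because $\sigma$ is independent of $v$, the four terms under the integral collapse to $2\sigma\big(Kg - \int_V K'g'\,dv'\big)$; the equilibrium identity \eqref{eq:equilibrium} rewrites $Kg - \int_V K'g'\,dv' = -v\partial_x g$, which cancels the drift contribution $v\sigma\,\partial_x g$ coming from the transport term. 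This leaves the clean formula $\T\Pi f = v\,g\,\partial_x\sigma$. As a check, $\rho_{\T\Pi f} = \partial_x\big(\sigma\int_V v g\,dv\big) = 0$, consistent with $\Pi\T\Pi = 0$.

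Next I would evaluate both norms in the inner product of $\mathcal H$. A direct computation gives
\[ \|\T\Pi f\|^2 = \int_\RR \theta(x)\,(\partial_x\sigma)^2\,dx\,, \qquad \|\Pi f\|^2 = \int_\RR \rho_g(x)\,\sigma^2\,dx\,, \]
where $\rho_g = \int_V g\,dv$ and $\theta(x) := \int_V v^2 g(x,v)\,dv$. The membership $f\in\mathcal H$ forces $\int_\RR \rho_g\sigma\,dx = 0$, so that $\int_\RR\rho_g\sigma^2\,dx = \inf_c\int_\RR\rho_g(\sigma-c)^2\,dx$ is precisely the variance of $\sigma$ against the weight $\rho_g$. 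The assertion is thus the weighted Poincar\'e inequality $\mathrm{Var}_{\rho_g}(\sigma)\le\lambda_M^{-1}\int_\RR\theta(\partial_x\sigma)^2\,dx$.

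I would then remove the geometry of $\theta$ and $\rho_g$ using \eqref{g-ass1}. Since $|V|=1$ and $\int_V v^2\,dv = 1/12$, both $\rho_g$ and $\theta$ are comparable to $e^{-\alpha|x|}$; in particular $\theta\ge\big(u_{min}/(12\,u_{max})\big)\rho_g$, which reduces the claim to the single-weight inequality $\mathrm{Var}_{\rho_g}(\sigma)\le C_P\int_\RR\rho_g(\partial_x\sigma)^2\,dx$. As $\rho_g\asymp e^{-\alpha|x|}$ as measures, this inequality transfers, up to the comparison constants, from the Poincar\'e inequality for the symmetric exponential (Laplace) measure $e^{-\alpha|x|}\,dx$ (centering the variance at the mean of $\nu:=e^{-\alpha|x|}\,dx$ and using $d\mu\asymp d\nu$ on both the gradient and the zeroth-order side).

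The main obstacle is therefore this last ingredient, the spectral gap of the Laplace measure, which I would obtain from the classical one-dimensional Hardy/Muckenhoupt criterion applied on each half-line. For $w = e^{-\alpha x}$ on $(0,\infty)$ the Muckenhoupt quantity $\sup_{r>0}\big(\int_r^\infty w\big)\big(\int_0^r w^{-1}\big) = \alpha^{-2}\sup_{r>0}(1-e^{-\alpha r}) = \alpha^{-2}$ is finite, yielding $\int_0^\infty(\sigma-\sigma(0))^2 w\,dx\le C\int_0^\infty(\partial_x\sigma)^2 w\,dx$; the mirror estimate holds on $(-\infty,0)$, and adding the two after bounding $\mathrm{Var}_\nu(\sigma)\le\int(\sigma-\sigma(0))^2\,d\nu$ gives the full-line Poincar\'e inequality. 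Propagating all comparison constants then produces an explicit admissible $\lambda_M>0$. The only technical care needed is the standard trace/density justification that $\sigma$ admits a value at $x=0$ and lies in the relevant weighted $H^1$-space, which follows from $f\in\mathcal H\cap\mathcal D(\T\Pi)$.
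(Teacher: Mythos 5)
Your proposal is correct, and its skeleton coincides with the paper's: both reduce the claim, via the identity $\T\Pi f = v\g\,\partial_x(\rho_f/\rho_g)$ (which the paper dismisses as a ``straightforward computation'' and you verify explicitly using the equilibrium equation \eqref{eq:equilibrium}), to a weighted Poincar\'e inequality for the measure $\rho_g\,dx$, after absorbing the second moment $m_g=\int_V v^2 \g\,dv$ into $\rho_g$ through the two-sided bound \eqref{g-ass1}. The genuine divergence is the final ingredient. The paper obtains the Poincar\'e inequality by comparing $\rho_g$ with a smooth weight whose logarithm is asymptotically linear and invoking Theorem A.1 of \cite{Villani}, i.e.\ a spectral-gap criterion based on Schr\"odinger operators. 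You instead prove it from scratch: you transfer the problem to the Laplace measure $e^{-\alpha|x|}dx$ by two-sided measure equivalence (correctly exploiting that the variance is an infimum over centerings, so the comparison constants $u_{min}$, $u_{max}$ enter only multiplicatively on the two sides), and you establish the spectral gap of the Laplace measure by Muckenhoupt's criterion for the Hardy inequality on each half-line, whose characteristic quantity you compute exactly as $\alpha^{-2}$. Your route is more elementary and self-contained, and it yields an explicit, traceable $\lambda_M$ in terms of $\alpha$, $u_{min}$, $u_{max}$ and $\int_V v^2\,dv = 1/12$; the paper's citation is shorter and its Schr\"odinger-operator criterion adapts more readily to weights that are not piecewise log-linear and, in principle, to higher-dimensional settings. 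Your closing remark about the trace at $x=0$ is also the right technical caveat: since $\rho_g$ is locally bounded below, finiteness of $\|\T\Pi f\|$ places $\sigma=\rho_f/\rho_g$ in $H^1_{loc}(\RR)$, so $\sigma(0)$ is well defined.
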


\begin{proof}
A straightforward computation gives
$$
  \|\T\Pi f\|^2 = \int_{\RR} \left( \partial_x \dfrac{\rho_f}{\rho_g} \right)^2 m_g dx \,,\qquad\mbox{with } m_g(x) 
  =\int_V v^2 g(x,v)dv \,.
$$
By the boundedness of the velocity space, $\rho_g$ and $m_g$ satisfy estimates of the form \eqref{g-ass1}, implying the existence of
a positive constant $c$ such that
$$
  \|\T\Pi f\|^2 \ge c \int_{\RR} \left( \partial_x \dfrac{\rho_f}{\rho_g} \right)^2 \rho_g dx \,.
$$
We claim that the measure $\rho_g dx$ permits a Poincar\'e inequality. This can be proved via bounds on the spectrum of Schr\"odinger 
operators. It is a consequence of the fact that $\rho_g$ can be bounded from above and below by multiples of a smooth function, whose 
logarithm is asymptotically linear as $|x|\to\infty$ and Theorem A.1 of \cite{Villani}. Therefore, there exists a positive constant $\lambda_M$
such that
$$
  \|\T\Pi f\|^2 \ge \lambda_M \int_{\RR} \left( \dfrac{\rho_f}{\rho_g} \right)^2 \rho_g dx = \lambda_M \|\Pi f\|^2 \,.
$$
\end{proof}

The approach of \cite{DMS} relies on the 'modified entropy'
$$
  \H[f] := \dfrac{1}{2} \|f\|^2 + \varepsilon \langle \A f,f\rangle \,,\qquad\mbox{with } \A := (1 + (\T\Pi)^*\T\Pi)^{-1} (\T\Pi)^* \,,
$$
and with a small positive constant $\varepsilon$. Its time derivative is given by
$$
\frac{d}{dt}\H[f] = \scalar{\L\,f}{f} - \eps\,\scalar{\A\,\T\,\Pi\,f}f - \eps\,\scalar{\A\,\T\,(1-\Pi)\,f}f + \eps\,\scalar{\T\,\A\,f}f 
  + \eps\,\scalar{\A\,\L\,f}{f} \,.
$$
The first two terms on the right hand side are responsible for the decay of $\H[f]$, noting microscopic coercivity 
(Lemma \ref{lem:micro-coercivity}) and
the fact that the macroscopic coercivity result of Lemma \ref{lem:macro-coercivity} implies 
$$
  \scalar{\A\,\T\,\Pi\,f}f \ge \frac{\lambda_M}{1+\lambda_M} \|\Pi f\|^2 \,.
$$
Coercivity of the dissipation of $\H[f]$ can be obtained by choosing $\eps$ small enough, if the auxiliary operators 
$\A\T(1-\Pi)$, $\T\A$, and  $\A\L$ are bounded, since they only act on $(1-\Pi)f$. By Lemma 1 of \cite{DMS}, $\A$ and $\T\A$
are bounded. 

\begin{lemma} \label{lem:L-bounded}
Let \eqref{g-ass1} hold. Then the operator $\L$, defined in \eqref{def-L}, is bounded.
\end{lemma}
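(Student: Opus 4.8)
The plan is to estimate $\|\L f\|$ directly in the $\mathcal{H}$-norm, the key point being that the two-sided bound \eqref{g-ass1} makes all the $x$-dependent exponential weights cancel. First I would split the operator \eqref{def-L} into a gain (integral) part and a loss (multiplication) part,
\[
  \L f = \int_V \frac{g'K'+gK}{2}\,\frac{f'}{g'}\,dv' - \left(\int_V \frac{g'K'+gK}{2}\,dv'\right)\frac{f}{g} =: \mathcal{G}f - \mathcal{M}f\,,
\]
and bound them separately via $\|\L f\|\le \|\mathcal{G}f\|+\|\mathcal{M}f\|$. The observation used throughout is that, by \eqref{g-ass1}, for each fixed $x$ the ratio $g(x,v)/g(x,v')$ is bounded by $u_{max}/u_{min}$ \emph{uniformly in $x$} (the factor $e^{-\alpha|x|}$ cancels), and likewise $\rho_g(x)=\int_V g\,dv \le u_{max}e^{-\alpha|x|}$ while $\int_V g^{-1}\,dv \le u_{min}^{-1}e^{\alpha|x|}$, so any pairing of one such integral against $\rho_g$ (or against $g$ itself) is $x$-uniformly bounded.

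For the loss term, $\mathcal{M}$ is multiplication by $m(x,v):=g^{-1}\int_V \tfrac12(g'K'+gK)\,dv' = \tfrac1{2g}\int_V g'K'\,dv' + \tfrac{K}{2}$, using $|V|=1$. By \textbf{(H1)} one has $K\le K_{\max}$, and $\int_V g'K'\,dv' \le K_{\max}\rho_g \le K_{\max}u_{max}e^{-\alpha|x|}$, which against $g^{-1}\le u_{min}^{-1}e^{\alpha|x|}$ shows $m\in L^\infty(\RR\times V)$; hence $\|\mathcal{M}f\|\le \|m\|_{L^\infty}\|f\|$.

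The gain term $\mathcal{G}$ is local in $x$ and acts as an integral operator in $v$, and this is where the estimate must be done with care. Writing $\mathcal{G}f = \tfrac12\int_V K'f'\,dv' + \tfrac{gK}{2}\int_V \tfrac{f'}{g'}\,dv'$, I would bound each summand by applying Cauchy--Schwarz in $v'$ so as to isolate the factor $\int_V f'^2/g'\,dv'$, which reconstructs $\|f\|^2$ after integration in $x$. The mechanism is that Cauchy--Schwarz generates a compensating weight: for instance $\bigl(\int_V K'f'\,dv'\bigr)^2 \le \bigl(\int_V K'^2 g'\,dv'\bigr)\bigl(\int_V f'^2/g'\,dv'\bigr) \le K_{\max}^2\,\rho_g(x)\int_V f'^2/g'\,dv'$, and the factor $\rho_g(x)\sim e^{-\alpha|x|}$ exactly cancels the $\int_V dv/g \sim e^{\alpha|x|}$ produced when the square is integrated against $dv/g$; the second summand is handled identically, the role of $\rho_g$ now being played by $\int_V g\,dv$ after Cauchy--Schwarz. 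Integrating in $x$ then gives $\|\mathcal{G}f\|\le C\|f\|$, and the triangle inequality concludes.

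There is no deep obstacle: boundedness is a routine consequence of \eqref{g-ass1} and \textbf{(H1)}. The only point requiring attention is the bookkeeping of the exponential weights in $\mathcal{G}$, ensuring that every $e^{\alpha|x|}$ coming from $\int_V dv/g$ is paired with an $e^{-\alpha|x|}$ coming from $\rho_g$ (or from $g$), so that the resulting constant is independent of $x$. Equivalently, one may observe that for each fixed $x$ the velocity kernel of $\mathcal{G}$ is Hilbert--Schmidt on $L^2(V;dv/g)$ with an $x$-uniform norm once these factors cancel.
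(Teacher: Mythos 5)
Your proof is correct, but it takes a genuinely different route from the paper's. You bound the operator norm directly: you split $\L = \mathcal{G}-\mathcal{M}$ into gain and loss parts, note that $\mathcal{M}$ is multiplication by a bounded function (by $K\le K_{max}$ and $\rho_g/g\le u_{max}/u_{min}$), and estimate $\mathcal{G}$ by Cauchy--Schwarz in $v'$ against the weight $1/g'$, the point being that every factor $e^{\alpha|x|}$ coming from $\int_V dv/g$ is paired with an $e^{-\alpha|x|}$ coming from $\rho_g$; your computations check out, including the second gain term that you only assert is handled ``identically'' (there Cauchy--Schwarz yields $\int_V dv'/g'\le u_{min}^{-1}e^{\alpha|x|}$, which is cancelled by $\rho_g\le u_{max}e^{-\alpha|x|}$ arising from the outer $v$-integration of $gK^2$). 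The paper never decomposes $\L$: it estimates the quadratic form instead, using the symmetrized Dirichlet-form structure of \eqref{def-L} already exploited for the dissipation identity, namely $|\scalar{\L f}{f}| = \tfrac12\iiint \tfrac{g'K'+gK}{2}\left(\tfrac{f'}{g'}-\tfrac{f}{g}\right)^2 dv'\,dv\,dx$, then expands the square via $(a-b)^2\le 2(a^2+b^2)$ and applies the same two ingredients to obtain $|\scalar{\L f}{f}|\le 2K_{max}\left(1+\tfrac{u_{max}}{u_{min}}\right)\|f\|^2$; operator boundedness then follows because $\L$ is symmetric, so the form bound converts into a norm bound by polarization. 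The trade-off: the paper's argument is shorter and recycles structure already in place, but it leans on the symmetry of $\L$ to conclude; yours gives the operator-norm bound directly without invoking symmetry, and your closing Hilbert--Schmidt observation yields extra information (for each fixed $x$, the gain part is a compact operator in $v$ with $x$-uniform norm) that the quadratic-form estimate does not provide. Both proofs ultimately hinge on exactly the same cancellation furnished by \eqref{g-ass1}: $\rho_g(x)\,\sup_{v\in V}g(x,v)^{-1}\le u_{max}/u_{min}$ uniformly in $x$.
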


\begin{proof}
The result follows from the estimate
\begin{eqnarray*}
  |\scalar{\L f}f| &\le& 2 \iiint_{\RR\times V^2} g'K' \left( \frac{{f'}^2}{{g'}^2} + \frac{f^2}{g^2} \right) dv'dv\,dx
  \le 2K_{max} \left( \|f\|^2 + \iint_{\RR\times V} \frac{\rho_g}{g}\,\frac{f^2}{g} dv\,dx \right) \\
  &\le& 2 K_{max} \left( 1 + \frac{u_{max}}{u_{min}} \right) \|f\|^2 \,.
\end{eqnarray*}
\end{proof}

It remains to prove boundedness of $\A\T(1-\Pi)$, which is equivalent to an elliptic regularity estimate.

\begin{lemma}\label{lem:ell-reg}
Let \eqref{g-ass1} hold. Then the operator $\A\T(1-\Pi)$ is bounded.
\end{lemma}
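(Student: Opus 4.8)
The plan is to use the self-adjointness built into the modified entropy method to pass to the adjoint, turning the unbounded forcing into a harmless one, and then to reduce everything to a weighted elliptic estimate for a one-dimensional Sturm--Liouville operator with exponentially decaying coefficients. Write $B:=(1+(\T\Pi)^*\T\Pi)^{-1}$, so that $\A=B(\T\Pi)^*$ and, since $B$ is self-adjoint, $\A^*=\T\Pi B$. A densely defined operator is bounded iff its adjoint is, and $(\A\T(1-\Pi))^*\supseteq -(1-\Pi)\T\A^*$, so it suffices to prove that $(1-\Pi)\T\A^*=(1-\Pi)\T\T\Pi B$ extends to a bounded operator on $\mathcal H$. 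The gain is that for $u\in\mathcal H$ the function $\chi:=\Pi Bu$ lies in the range of $\Pi$, say $\chi=\psi(x)g(x,v)$, solves $(1+(\T\Pi)^*\T\Pi)\chi=\Pi u$, and satisfies $\A^* u=\T\chi=vg\,\partial_x\psi$. Using the identity $\T(\psi g)=vg\,\partial_x\psi$ (which follows from \eqref{eq:equilibrium}) together with \eqref{rhoTf}, the equation for $\chi$ is the weak form of
\[
  -\partial_x\bigl(m_g\,\partial_x\psi\bigr)+\rho_g\,\psi=\rho_u\,,\qquad m_g=\int_V v^2 g\,dv\,,\quad \rho_u=\int_V u\,dv\,,
\]
whose source is controlled, since Cauchy--Schwarz gives $\int_\RR \rho_u^2/\rho_g\,dx\le\|u\|^2$.

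First I would solve this equation and record the basic energy estimate by testing against $\psi$:
\[
  \int_\RR m_g(\partial_x\psi)^2\,dx+\int_\RR \rho_g\psi^2\,dx=\int_\RR \rho_u\psi\,dx\le\Bigl(\int_\RR \tfrac{\rho_u^2}{\rho_g}\,dx\Bigr)^{1/2}\Bigl(\int_\RR \rho_g\psi^2\,dx\Bigr)^{1/2}\,,
\]
so that, by Lax--Milgram on the weighted energy space, $\psi$ exists and $\|\psi\|_{L^2(\rho_g\,dx)}\le\|u\|$, $\|\partial_x\psi\|_{L^2(m_g\,dx)}\le\|u\|$. These are exactly $\|\chi\|\le\|u\|$ and $\|\T\Pi\chi\|\le\|u\|$.

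Next I would compute $(1-\Pi)\T\T\Pi\chi$ explicitly. Writing $\T=v\partial_x+\T_0$, with $\T_0$ the velocity-averaging part of \eqref{def-T}, which is bounded on $\mathcal H$ (by the same mechanism as in Lemma \ref{lem:L-bounded}, the weight $\int_V dv/g\sim e^{\alpha|x|}$ being compensated by the exponential decay of the kernel coefficients), and using $\T\Pi\chi=vg\,\partial_x\psi$, one finds that the only contribution carrying two derivatives is $(1-\Pi)\bigl(v^2 g\,\partial_x^2\psi\bigr)=\bigl(v^2-m_g/\rho_g\bigr)g\,\partial_x^2\psi$. All remaining terms involve only $\psi$ or $\partial_x\psi$ multiplied by coefficients comparable to $\rho_g$ or $m_g$ (here one uses \eqref{g-ass1} and $v\partial_x g=\int_V K'g'\,dv'-Kg$, which tames the $1/v$ in $\partial_x g$ through the $v^2$-weights), hence are absorbed into $\|\chi\|+\|\T\Pi\chi\|\le 2\|u\|$. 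It therefore remains to bound $\int_\RR (\partial_x^2\psi)^2\,n_g\,dx$ with $n_g:=\int_V (v^2-m_g/\rho_g)^2 g\,dv\sim e^{-\alpha|x|}$.

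This weighted second-order bound is the heart of the matter and the step I expect to be the main obstacle, because $n_g$ \emph{decays}, so a naive reading of the equation makes $\partial_x^2\psi$ look non-integrable against $n_g$. The resolution is that the source lives in the strongly weighted space $L^2(e^{\alpha|x|}\,dx)$: from the equation $\partial_x(m_g\partial_x\psi)=\rho_g\psi-\rho_u$, and since $\rho_g\sim e^{-\alpha|x|}$ the two energy bounds yield $\|\partial_x(m_g\partial_x\psi)\|_{L^2(dx/\rho_g)}\le 2\|u\|$ and $\|m_g\partial_x\psi\|_{L^2(dx/m_g)}\le\|u\|$. Writing $\partial_x^2\psi=\partial_x(m_g\partial_x\psi)/m_g-(\partial_x m_g)(m_g\partial_x\psi)/m_g^2$, using $|\partial_x m_g|=|\int_V vKg\,dv|\lesssim\rho_g\sim m_g$ (Lipschitz regularity of $m_g$, again from \eqref{eq:equilibrium}), and exploiting that the exponential rates match, $n_g/m_g^2\sim e^{\alpha|x|}\sim 1/\rho_g$, the weights cancel exactly and $\int_\RR(\partial_x^2\psi)^2 n_g\,dx\lesssim\|u\|^2$. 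I would pay particular attention to the coefficients at $x=0$, where the kernel jumps: being $v$-averages, $m_g$ and $\rho_g$ are even and continuous across the origin with at most a kink in $\partial_x m_g$, so the a.e. estimate holds on all of $\RR$ without a matching argument. Collecting the bounds gives $\|(1-\Pi)\T\A^* u\|\lesssim\|u\|$ on a dense set of $u$; extending by density and dualising yields the boundedness of $\A\T(1-\Pi)$.
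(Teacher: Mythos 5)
Your proof is correct and takes essentially the same route as the paper: pass to the adjoint, reduce to the projected Sturm--Liouville equation $\rho_g\psi - \partial_x\left(m_g\,\partial_x\psi\right) = \rho_u$ with its two energy bounds, identify $v^2 g\,\partial_x^2\psi$ as the only genuinely second-order term, and close with the weighted $H^2$ estimate using $|\partial_x m_g| \lesssim e^{-\alpha|x|}$ (from the stationary equation) and the exact matching of the exponential weights. The only cosmetic differences are that you retain the projection $(1-\Pi)$ throughout and organize the computation via the splitting $\T = v\partial_x + \T_0$ with $\T_0$ bounded, neither of which changes the argument.
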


\begin{proof}
As in \cite{DMS} we work on the adjoint. It is sufficient to prove boundedness of
$$
  (\A\T)^* = -\T^2\Pi (1+(\T\Pi)^*\T\Pi)^{-1} \,.
$$
Introducing $\varphi = (1+(\T\Pi)^*\T\Pi)^{-1} f$, the scalar product of the equivalent equation
\begin{equation} \label{phi-equ}
  \varphi + (\T\Pi)^* \T\Pi\varphi = f
\end{equation}
with $\varphi$ leads to the estimate $\|\varphi\|, \|\T\Pi\varphi\| \le \|f\|$.
Application of $\Pi$ to \eqref{phi-equ} gives
\begin{equation} \label{eq:rho_phi}
  \rho_g u_{\varphi} - \partial_x \left( m_g \partial_x u_{\varphi} \right) = \rho_f \,,
\end{equation}
with $\rho_{\varphi} = \rho_g u_{\varphi}$. We shall have to estimate the norm of 
$$
  \T^2\Pi\varphi =  g \,v^2 \partial_x^2 u_{\varphi} + \frac{1}{2}\partial_x u_{\varphi}  \int_V (v'-v)(Kg - K'g')dv' \,,
$$
satisfying
$$
  |\T^2\Pi\varphi| \le C e^{-\alpha|x|} \left( \left|\partial_x^2 u_{\varphi}\right| 
   + \left|\partial_x u_{\varphi} \right| \right) \,,
$$
in terms of $\|f\|$, which is a weighted $L^2\to H^2$ regularisation result for \eqref{eq:rho_phi}. The first order
derivative has already been taken care of by the bound for $\T\Pi\varphi = gv \partial_x u_{\varphi}$.
We shall also need
$$
  |\partial_x m_g| = \left| \iint_{V^2} v(Kg-K'g')dv'dv \right| \le C e^{-\alpha|x|} \,,
$$
where we have used the equation \eqref{eq:equilibrium}, satisfied by $g$, and \eqref{g-ass1}. Now \eqref{eq:rho_phi}
is rewritten as 
$$
  m_g \partial_x^2 u_{\varphi} = \rho_{\varphi} 
  - \partial_x m_g\partial_x u_{\varphi} - \rho_f \,,
$$
and the proof is completed by taking $L^2$-norms with the weight $e^{\alpha|x|}$, noting that $m_g\ge ce^{-\alpha|x|}$.
\end{proof}

We have completed the verification of the assumptions of Theorem 2 of \cite{DMS} and arrive at our main result:

\begin{theorem}
Let a stationary positive solution $g$ of \eqref{kinetic} (unique up to a constant factor) satisfy \eqref{g-ass1}, let $f_I\in L^2(dv\,dx/g)$
($\subset L^1(dv\,dx)$), 
and let
$$
  f_\infty(x,v) := g(x,v) \int_{\RR\times V} f_I\, dv\,dx \left( \int_{\RR\times V} g\, dv\,dx\right)^{-1} \,.
$$
Then the solution of \eqref{kinetic} subject to $f(t=0)=f_I$ satisfies
$$
   \|f(t,\cdot,\cdot)-f_\infty\|_{L^2(dv\,dx/g)} \le C e^{-\lambda t} \|f_I-f_\infty\|_{L^2(dv\,dx/g)}\,,
$$
with positive constants $C$ and $\lambda$, only depending on $\chi\in(0,1)$.
\end{theorem}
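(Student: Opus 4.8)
The plan is to apply the abstract hypocoercivity theorem of \cite{DMS} (Theorem 2 there), since the reformulation $\partial_t f + \T f = \L f$ together with the lemmas established above supplies every structural hypothesis it requires. First I would reduce to the mean-zero space $\mathcal{H}$ of \eqref{def:H}: by conservation of total mass the difference $f - f_\infty$ has vanishing integral for all $t\ge 0$ and solves the same equation, so it suffices to prove exponential decay of $\|f\|$ for mean-zero data. I would then record the algebraic structure already in place, namely that $g$ lies in $\mathcal{N}(\L)\cap\mathcal{N}(\T)$, that $\L$ is symmetric with $\scalar{\L f}{f}\le 0$, that $\T$ is skew-symmetric, and that $\Pi\T\Pi = 0$ by \eqref{rhoTf} together with $\int_V vg\,dv = 0$ (Assumption (H3) of \cite{DMS}).

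The heart of the argument is the modified entropy $\H[f] = \tfrac12\|f\|^2 + \eps\scalar{\A f}{f}$ with $\A = (1+(\T\Pi)^*\T\Pi)^{-1}(\T\Pi)^*$. I would first observe that $\|\A\|\le\tfrac12$ (a consequence of the defining resolvent, cf. Lemma 1 of \cite{DMS}), so that for $\eps$ small enough $\H[f]$ is equivalent to $\tfrac12\|f\|^2$:
\[
  \tfrac{1-\eps}{2}\|f\|^2 \le \H[f] \le \tfrac{1+\eps}{2}\|f\|^2\,.
\]
Differentiating $\H[f]$ along the flow yields the five-term identity displayed above. The first term is controlled from above by microscopic coercivity (Lemma \ref{lem:micro-coercivity}), giving $\scalar{\L f}{f}\le -K_{min}\|(1-\Pi)f\|^2$, while the second term $-\eps\scalar{\A\T\Pi f}{f}$ is bounded above by $-\eps\frac{\lambda_M}{1+\lambda_M}\|\Pi f\|^2$ via macroscopic coercivity (Lemma \ref{lem:macro-coercivity}). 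The three remaining terms $\scalar{\A\T(1-\Pi)f}{f}$, $\scalar{\T\A f}{f}$, $\scalar{\A\L f}{f}$ act only on $(1-\Pi)f$ and are controlled by the boundedness of $\A$, $\T\A$ (Lemma 1 of \cite{DMS}), $\A\L$ (Lemma \ref{lem:L-bounded}) and $\A\T(1-\Pi)$ (Lemma \ref{lem:ell-reg}); by Young's inequality each is dominated by $C\eps\,\|(1-\Pi)f\|\,\|f\|$.

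Choosing $\eps$ small then closes the estimate: the negative contributions of micro- and macro-coercivity, of respective orders $\|(1-\Pi)f\|^2$ and $\eps\|\Pi f\|^2$, dominate the cross terms, yielding $\frac{d}{dt}\H[f]\le -\kappa\,\|f\|^2$ for some $\kappa>0$, and hence, through the norm equivalence, $\frac{d}{dt}\H[f]\le -\lambda\,\H[f]$ with $\lambda = 2\kappa/(1+\eps)$. Gr\"onwall's lemma gives $\H[f(t)]\le e^{-\lambda t}\H[f(0)]$, and translating back through the norm equivalence produces the asserted estimate with $C = \sqrt{(1+\eps)/(1-\eps)}$. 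Finally, tracking the constants $K_{min},K_{max},\lambda_M,\alpha$ through the argument shows they depend only on $\chi$ via the kernel \eqref{eq:example}, so the same holds for $C$ and $\lambda$. The only delicate point is the simultaneous choice of $\eps$ balancing all five terms, but this is precisely the content of Theorem 2 of \cite{DMS}; since the nontrivial operator bounds---most notably the weighted elliptic regularity estimate behind Lemma \ref{lem:ell-reg}---have already been secured, the remaining work is the routine absorption argument.
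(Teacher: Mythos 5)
Your proposal is correct and follows essentially the same route as the paper: the paper's proof consists precisely of the reformulation $\partial_t f+\T f=\L f$, the verification of $\Pi\T\Pi=0$, microscopic and macroscopic coercivity (Lemmas \ref{lem:micro-coercivity}, \ref{lem:macro-coercivity}), and the operator bounds (Lemmas \ref{lem:L-bounded}, \ref{lem:ell-reg} together with Lemma 1 of \cite{DMS}), after which Theorem 2 of \cite{DMS} is invoked directly. Your only addition is to unfold the modified-entropy/Gr\"onwall argument of \cite{DMS} explicitly (norm equivalence, absorption of the cross terms for small $\eps$), which is a faithful rendering of that cited theorem rather than a different method.
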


\section{Macroscopic limits}

\subsection*{The 'macroscopic limit' corresponding to the modified entropy method}

The separation into microscopic and macroscopic contributions employed in the previous section can be motivated by a
macroscopic limit, based on the assumption of a separation of time scales related to the collision and the transport operators.
With an appropriate (parabolic) rescaling of time, this leads to studying the limit as $\eps\to 0+$ in 
\begin{equation} \label{macro-scaling}
  \eps \partial_t f^\eps + \T f^\eps = \dfrac 1\eps \L f ^\eps \,.
\end{equation}
Whereas for the standard transport operator $v\partial_x$ the scale separation can be achieved by a length rescaling, the 
introduction of the 'Knudsen number' $\eps$ is completely artificial in the present situation, since the modified collision and transport
operators contain identical terms whose different weighting cannot be justified by scaling arguments. This is the reason for the
quotation marks in the title of this subsection.

The limit $\eps\to 0$ in the abstract equation \eqref{macro-scaling} has been carried out formally in \cite{DMS} under the assumptions 
$\Pi\T\Pi=0$, already used above, and that the restriction of the collision operator $\L$ to $(1-\Pi)\mathcal{H}$ possesses an inverse $\J$.
The formal limits $f^0$ of solutions of \eqref{macro-scaling} satisfy
\begin{equation}\label{macro-limit}
  f^0 \in \Pi\mathcal{H} \,,\qquad \partial_t f^0 = (\T\Pi)^* \J (\T\Pi) f^0 \,.
\end{equation}
Note that the macroscopic coercivity estimate in Lemma \ref{lem:macro-coercivity} is related to the simplified version $-(\T\Pi)^* (\T\Pi)$
of the macroscopic evolution operator. Since $f^0(x,v,t) = \rho^0(x,t) g(x,v)/\rho_g(x)$, the above evolution equation is equivalent to an 
equation for $\rho^0$.

\begin{lemma} \label{lem-Lf=h}
Let $\L$ be defined by \eqref{def-L}. Then the equation $\L f = h$ is solvable for $f$, iff $h\in (1-\Pi)\mathcal{H}$. For every solution there 
exists a function $\mu(x)$ such that
\[
  f = \mu g + \frac{2g}{\lambda + gK} \left( \frac{\int_V h/(\lambda+gK)dv}{\int_V 1/(\lambda+gk)dv} - h\right) \,,\qquad\mbox{with }
  \lambda(x) = \int_V g(x,v)K(x,v)dv \,.
\]
The additional requirement $f\in (1-\Pi)\mathcal{H}$ determines $\mu$ uniquely.
\end{lemma}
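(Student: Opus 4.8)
The plan rests on the observation that $\L$ acts fiberwise in $x$: for each fixed $x$ it is a bounded (Lemma~\ref{lem:L-bounded}) self-adjoint operator on the fiber $L^2(dv/g)$ whose kernel is exactly $\mathrm{span}\,g(x,\cdot)$ (this is $\L g=0$ together with the symmetry recorded after \eqref{def-L}). I would first settle the solvability condition. Integrating $\L f=h$ in $v$ and using that the kernel $\tfrac12(g'K'+gK)$ is symmetric under $v\leftrightarrow v'$ while $(f'/g'-f/g)$ is antisymmetric, one gets $\int_V \L f\,dv=0$ identically, so $\rho_h:=\int_V h\,dv=0$ is \emph{necessary}. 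Since $\Pi h=\rho_h\,g/\rho_g$, this is precisely $h\in(1-\Pi)\mathcal H$. Sufficiency will follow by exhibiting a solution explicitly, and the one-dimensionality of $\ker\L$ will account for the free function $\mu$.

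Next I would substitute $u:=f/g$. Writing the primes for evaluation at $v'$ and using $\int_V g'K'\,dv'=\lambda$, $\int_V gK\,dv'=gK$ (as $|V|=1$), the equation $\L f=h$ becomes, pointwise in $v$,
\[
  \tfrac12\bigl(m+gK\,n\bigr)-\tfrac12\,u\,(\lambda+gK)=h\,,\qquad m:=\int_V gK\,u\,dv\,,\quad n:=\int_V u\,dv\,,
\]
where $m(x),n(x)$ are the only unknown scalars. Since $\lambda+gK>0$, this solves as $u=(m+gK\,n-2h)/(\lambda+gK)$, so that $f=gu$ is fully determined by $h$ once the two moments are fixed self-consistently.

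To close the system I would take moments of $u\,(\lambda+gK)=m+gK\,n-2h$. Dividing by $\lambda+gK$ and integrating in $v$, and using the elementary identity $\int_V gK/(\lambda+gK)\,dv=1-\lambda\,a_0$ with $a_0:=\int_V dv/(\lambda+gK)$ (which comes from $\int_V gK\,dv=\lambda$ and $|V|=1$), the $n$-moment collapses to the single relation $m-\lambda n=2b_0/a_0$, where $b_0:=\int_V h/(\lambda+gK)\,dv$; the $gK$-moment, after the analogous identities $\int_V (gK)^2/(\lambda+gK)\,dv=\lambda^2 a_0$ and $\int_V gK\,h/(\lambda+gK)\,dv=\rho_h-\lambda b_0$, reproduces the \emph{same} relation once $\rho_h=0$. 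Thus the self-consistency is rank one: $m-\lambda n$ is determined while $n$ stays free, which is exactly the homogeneous direction $\mu g$.

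Finally I would substitute back and write $gK=(\lambda+gK)-\lambda$, so that $m+gK\,n-2h=(m-\lambda n)+(\lambda+gK)\,n-2h$. The middle term contributes $g\,n$ to $f$, which I absorb into the homogeneous solution $\mu g$ (with $\mu=n$ free), and inserting $m-\lambda n=2b_0/a_0$ yields precisely the stated formula. The constant $\mu$ is then pinned down uniquely by the requirement $f\in(1-\Pi)\mathcal H$, i.e.\ $\rho_f=\mu\rho_g+\rho_{f_p}=0$, giving $\mu=-\rho_{f_p}/\rho_g$ since $\rho_g>0$. I expect the only real obstacle to be the moment algebra of the third step: checking that both self-consistency relations collapse to $m-\lambda n=2b_0/a_0$ exactly when $\rho_h=0$, so that the solution set is genuinely a one-parameter family matching $\dim\ker\L=1$.
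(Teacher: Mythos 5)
Your proposal is correct and follows essentially the same route as the paper's proof: you rewrite $\L f=h$ pointwise in terms of the two scalar moments (your $m,n$ are the paper's $\nu,\mu$), invert $\lambda+gK>0$ to express $f$ through them, and close with a rank-deficient moment system whose compatibility condition is $\int_V h\,dv=0$, recovering the stated formula with the free direction $\mu g$. The only (harmless) addition is your direct symmetry/antisymmetry argument for necessity, which the paper instead reads off from the non-invertibility of the same linear system.
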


\begin{proof}
With $\nu = \int_V Kf\,dv$, $\mu = \int_V f/g\,dv$, the equation $\L f = h$ can be rewritten as
$$
  f = g\left( \mu + \frac{\nu-\lambda\mu-2h}{\lambda+gK} \right)
$$
Division by $g$ and integration with respect to $v$ gives an equation for $\nu-\lambda\mu$ leading to the claimed result. A second
equation for $\mu$ and $\nu$ can be obtained by multiplication by $K$ and integration. The coefficient matrix of the resulting system is 
non-invertible, and the solvability condition turns out to be $\int_V h\,dv = 0$, {\em i.e.}  $h\in (1-\Pi)\mathcal{H}$.
\end{proof}

\begin{proposition}
Let $\L$ and $\T$ be defined by \eqref{def-L} and, respectively, \eqref{def-T}. Then the equation \eqref{macro-limit} is equivalent to
\begin{equation}\partial_t \rho^0 = \partial_x\left(D\,\partial_x\left(\dfrac{\rho^0}{\rho_g}\right)\right)\,, \end{equation}
where $D=D(x)$ is given by
\begin{align*}
D &= 2\int_V \dfrac{v^2g^2dv}{Kg + \lambda}  - 2\left(\int_V \dfrac{vg\,dv}{Kg + \lambda}\right)^2 \left(\int_V \dfrac{dv}{Kg + \lambda}\right)^{-1} 
= 2Z\, {\rm Var}_{p}\left( vg \right)\, ,\\
& \mbox{with}\qquad p\,dv = \dfrac{dv}{Z(Kg + \lambda)} \,,\qquad Z = \int_V \frac{dv}{Kg+\lambda} \,. 
\end{align*}
\end{proposition}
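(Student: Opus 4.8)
The plan is to evaluate the composite operator $(\T\Pi)^*\J(\T\Pi)$ of \eqref{macro-limit} on the macroscopic state $f^0 = \Pi f^0 = \rho^0\,\g/\rho_g$ by applying its three factors in turn, using the explicit inversion formula of Lemma \ref{lem-Lf=h} for the middle factor $\J$. Throughout I write $u(x) = \rho^0(x)/\rho_g(x)$, which is independent of $v$, so that $f^0 = u\,\g$ and $u$ may be pulled out of every velocity integral.

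First I would compute the rightmost factor $\T\Pi f^0 = \T(u\,\g)$. Expanding \eqref{def-T} on $u\,\g$, every turning contribution is proportional to $u$ and collapses by means of the equilibrium identity $v\partial_x\g + K\g = \lambda$ read off from \eqref{eq:equilibrium}, with $\lambda(x)=\int_V K\g\,dv$; together with $\T\g=0$ this leaves $\T\Pi f^0 = v\,\g\,\partial_x u$, the very expression already recorded in the proof of Lemma \ref{lem:macro-coercivity}. I would then note that it lies in $(1-\Pi)\mathcal H$, since $\int_V v\,\g\,\partial_x u\,dv = \partial_x u\int_V v\,\g\,dv = 0$ by the zero-flux property $\int_V v\,\g\,dv = 0$ of the equilibrium, so that $\J$ may legitimately be applied.

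Next I would invert $\L$ with right-hand side $h = v\,\g\,\partial_x u$. Since $\int_V h/(K\g+\lambda)\,dv = \partial_x u\int_V v\,\g/(K\g+\lambda)\,dv$, Lemma \ref{lem-Lf=h} yields
\[
  \J h = \mu\,\g + \frac{2\,\g\,\partial_x u}{K\g+\lambda}\left( \frac1Z\int_V \frac{v\,\g}{K\g+\lambda}\,dv - v\,\g \right),
\]
where $\mu=\mu(x)$ is the multiple of the kernel fixed by the constraint $\J h\in(1-\Pi)\mathcal H$. Finally I would apply the leftmost factor: since $\Pi$ is self-adjoint and $\T$ skew-adjoint one has $(\T\Pi)^* = -\Pi\T$, and \eqref{rhoTf} turns this into $(\T\Pi)^*\xi = -(\g/\rho_g)\,\partial_x\int_V v\xi\,dv$ for any $\xi$. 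Taking $\xi = \J h$, equating with $\partial_t f^0 = (\g/\rho_g)\,\partial_t\rho^0$ (valid since $\g/\rho_g$ is time-independent) and cancelling the common factor $\g/\rho_g$ reduces \eqref{macro-limit} to $\partial_t\rho^0 = -\partial_x\int_V v\,\J h\,dv$. In this flux the kernel term vanishes because $\mu\int_V v\,\g\,dv = 0$, so $\mu$ need never be determined, and the two surviving integrals combine to $\int_V v\,\J h\,dv = -D\,\partial_x u$ with $D$ exactly the claimed coefficient; substituting $u=\rho^0/\rho_g$ produces the advertised equation. Recognising $p\,dv = dv/(Z(K\g+\lambda))$ as a probability measure on $V$, the identity $D = 2Z\,\mathrm{Var}_p(v\,\g)$ is then one line of algebra, and it makes manifest that $D\ge 0$, so the limit is a genuine (possibly degenerate) drift-diffusion equation.

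I expect the only genuinely delicate point to be the first step, namely checking that all turning terms in $\T$ cancel on $f^0 = u\,\g$ so that $\T\Pi f^0 = v\,\g\,\partial_x u$; everything else is the bookkeeping of Lemma \ref{lem-Lf=h}. The pleasant structural feature, which I would emphasise, is that the free constant $\mu$ created by the non-uniqueness of $\L^{-1}$ is annihilated by the zero-flux property of $\g$, so that no normalisation constant from the solvability analysis survives into the diffusion coefficient $D$.
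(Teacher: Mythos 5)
Your proposal is correct and follows essentially the same route as the paper's own (very terse) proof: compute $\T\Pi f^0 = v\g\,\partial_x(\rho^0/\rho_g)$ using $\T\g=0$, invert $\L$ via Lemma \ref{lem-Lf=h}, and apply $(\T\Pi)^*=-\Pi\T$ through the identity \eqref{rhoTf}, with the kernel term $\mu\g$ dropping out since $\int_V v\g\,dv=0$. You merely supply the bookkeeping the paper leaves implicit, including the observation that the undetermined constant $\mu$ never enters the flux.
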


\begin{remark}
Note that the macroscopic evolution operator and the simplification $-(\T\Pi)^* \T\Pi$ only differ by the diffusivities $D$ vs. $m_g$.
Under the assumption \eqref{g-ass1}, both have the same behaviour as $|x|\to\infty$.
\end{remark}

\begin{proof} 
With $\T\Pi f^0= \T f^0 = vg\,\partial_x (\rho^0/\rho_g)$ and Lemma \ref{lem-Lf=h}, we obtain
$$
  \J\T\Pi f^0 = \mu g + \frac{2g}{\lambda+gK} \left( \frac{\int_V vg/(\lambda+gK)dv}{\int_V 1/(\lambda+gK)dv} - vg\right) 
    \partial_x \left(\frac{\rho^0}{\rho_g} \right) \,.
$$
It remains to apply $-\Pi\T$ to this expression, where we can use \eqref{rhoTf}.
\end{proof}

\subsection*{Weakly biased turning rate}

We assume $0<\chi\ll 1$ and introduce the (parabolic) macroscopic rescaling $x\to \frac{x}{\chi}$, $t\to \frac{t}{\chi^2}$:
\begin{eqnarray*}
  &&\chi^2 \partial_t f + \chi\,v\,\partial_x f = Q_0(f) + \chi Q_1(f) \,,\\
  &&\mbox{with}\quad Q_0(f) = \int_V(f'-f)dv' \,,\quad Q_1(f) = \int_V (f'\sign(xv') - f\sign(xv))dv' \,.
\end{eqnarray*}
The standard Hilbert expansion $f = f_0 + \chi f_1 + O(\chi^2)$ (analogously to above) leads to 
$$
  f_0(t,x,v) = \rho_0(t,x) \,,\quad v\,\partial_x f_0 = Q_0(f_1) + Q_1(f_0) \,,\quad \partial_t \rho_0 + \partial_x \int_V vf_1dv = 0 \,.
$$
Multiplication of the second equation by $v$ and integration finally gives
$$
  \partial_t \rho_0 = \partial_x \left( \frac{1}{12}\partial_x\rho_0 + \frac{\sign(x)}{4}\rho_0\right) \,.
$$
The equilibrium distributions $\rho_g$ and $e^{-3|x|}$ of the two macroscopic limits share the exponential behavior for $|x|\to \infty$
up to the decay rate (by \eqref{g-ass1}), which is a consequence of the shared asymptotic behavior of the diffusivities 
$D/\rho_g$ vs. $1/12$ and drift velocities $-D\partial_x(1/\rho_g)$ vs. $-\sign(x)/4$.

\subsection*{Acknowledgements.}
Part of this work was performed within the framework of the LABEX MILYON (ANR-10-LABX-0070) of Universit\'e de Lyon, within the program "Investissements d'Avenir" (ANR-11-IDEX-0007) operated by the French National Research Agency (ANR).

\end{document}